\documentclass{article}%
\usepackage{graphicx}
\usepackage{amsmath,amssymb,amsfonts}%
\usepackage{theorem}%
\usepackage{color}%
\usepackage{hyperref}
\usepackage[font={small, it}]{caption}
\usepackage{caption}
\usepackage{subcaption}
\setlength{\textwidth}{17cm}
\setlength{\textheight}{9in}
\setlength{\oddsidemargin}{0in}
\setlength{\topmargin}{-1.5cm}
\usepackage{float}
\setlength{\parindent}{0ex}%
\setlength{\parskip}{1ex}%
\usepackage{tikz-cd}
\tikzcdset{scale cd/.style={every label/.append style={scale=#1},
    cells={nodes={scale=#1}}}}
\usepackage{forest}
\usepackage{tikz-qtree}
\usetikzlibrary{trees}    
\theoremstyle{change}%
\sloppy%
\usepackage{cite}

\newtheorem{definition}{Definition:}[section]%
\newtheorem{proposition}[definition]{Proposition:}%
\newtheorem{theorem}[definition]{Theorem:}%
\newtheorem{lemma}[definition]{Lemma:}%
\newtheorem{corollary}[definition]{Corollary:}%
{\theorembodyfont{\rmfamily}\newtheorem{remark}[definition]{Remark:}}%
{\theorembodyfont{\rmfamily}}%

\newenvironment{proof}
{{\bf Proof:}}
{\qquad \hspace*{\fill} $\Box$}%

\newcommand{\tr}{\operatorname{tr}}%

\newcommand{\inner}{\operatorname{int}}%
\newcommand{\rme}{\mathrm{e}}%

\newcommand{\CC}{\mathcal{C}}%
\newcommand{\OC}{\mathcal{O}}%
\newcommand{\UC}{\mathcal{U}}%
\newcommand{\PC}{\mathcal{P}}

\newcommand{\N}{\mathbb{N}}%
\newcommand{\R}{\mathbb{R}}%

	\begin{document}

\title{Control sets of linear control systems on $\R^2$. The real case.}
\author{V\'{\i}ctor Ayala  \\Universidad de Tarapac\'a\\Instituto de Alta Investigaci\'on\\Casilla 7D, Arica, Chile
\and Adriano Da Silva\thanks{Supported by Proyecto UTA Mayor Nº 4768-23} \\Departamento de Matem\'atica,\\Universidad de Tarapac\'a - Iquique, Chile\and Anderson F. P. Rojas \\
 		Instituto de Matem\'{a}tica\\
		Universidade Estadual de Campinas, Brazil.}
\date{\today }
\maketitle

\begin{abstract}
In this paper, we study the dynamical behavior of a linear control system on $\R^2$ when the associated matrix has real eigenvalues. Different from the complex case, we show that the position of the control zero relative to the control range can have a strong interference in such dynamics if the matrix is not invertible. In the invertible case, we explicitly construct the unique control set with a nonempty interior.
\end{abstract}

\maketitle

{\small {\bf Keywords:} Controllability, control sets, linear control systems} 
	
{\small {\bf Mathematics Subject Classification (2020): 93B03, 93B05, 93B27.}}%

\section{Introduction}

Starting in the 1960s and motivated by the Cold War, control theory is nowadays a mathematical discipline with a high degree of maturity and enormous influence in the real world. In particular, for the class of linear control systems on finite-dimensional vector spaces, the number of applications is vast, acting in almost all areas of knowledge. A control system allows one to influence the behavior of an original system to achieve a specific goal. Of course, this model represents a solid understanding of how the system responds to different commands. Assume that a control system $\Sigma$ of the object to control is available, for example, from physical principles and engineering specifications. As a mathematical model, $\Sigma$ represents a way to influence its own states in order to reach a prescribed target using different strategies, i.e., the controls. We also need to consider constraints in control and target. For example, a government wants a country with full employment and a zero inflation rate. However, in the real world, a realistic target could be unemployment of less than 8\% and an inflation rate of less than 5\%, \cite{Macki}. Alternatively, in biomedicine, the number of therapeutic agents given to treat cancer has practical constraints. The drugs to eliminate cancer cells from a biological host should be given with consideration of their collateral damage effects, \cite{Shattler}. The system's evolution represents the crucial mathematical tool to face these problems.
Generally, a control system $\Sigma$ is given by a state space $M$, a differential manifold, and a family of ordinary differential equations (vector fields) determined by all possible controls. Assume the system $\Sigma$ achieves the desired target state $y\in M$, from a prescribed initial condition $x \in M$. Any process can be subject to small perturbations that move the system away from the target. Therefore, the reachable set of $\Sigma$ from the point $x$, i.e., the set that contains the elements reachable from $x$ through all available flows and any positive time, should contain a neighborhood of $y$. Even better, to be equal to the entire space of state $M$. In the last case, we say that the system $\Sigma$ is controllable from $x$. However, the controllability property of a control system is very rare. More realistic is the notion of a control set. Roughly speaking, control sets are the maximal regions of $M$ where controllability holds in its interior. The mathematical model of a control system allows us to understand the behavior of the system's dynamics through conceptual analysis.
The class of the linear control system is well-known and has been studied by several authors in different contexts (see, for instance, \cite{AMS, Jurdjevic, Kalman, Sontag, Wonham}). They have a straightforward structure and can appear directly from the dynamics of a specific problem, or as a linearization of a nonlinear model. In the last case, the analysis works locally for the original system. By definition, a linear control system on the $n$-dimensional real vector space $\R^n$, is given by
\begin{flalign*}
&&\dot{v}(t) = Av(t)+Bu(t), \hspace{.5cm}v(t)\in\R^n,\hspace{.5cm} u(t)\in\Omega, \hspace{.5cm}t\in\R, &&\hspace{-1cm}\left(\Sigma_{\R^n}\right)
\end{flalign*}
where $\Omega\subset \R^m$ is a bounded subset, $A \in\R^{n\times n}$ and $B\in \R^{n\times m}$ are matrices. If $0\in\inner\Omega$ and $\Sigma_{\R^n}$ satisfy the Lie Algebra Rank Condition (LARC), which means
$$\mathrm{rank}[A|B]=\mathrm{rank}\left[A^{n-1}B| A^{n-2}B|\cdots |B\right]=n,$$
it is well known that $\Sigma_{\R^n}$ admits a unique control set $\CC_{\R^n}$ with a nonempty interior. Such a control set is bounded if and only if the matrix $A$ has only eigenvalues with nonzero real parts, and it is closed (open) if and only if $A$ has only eigenvalues with nonnegative (nonpositive) real parts (see, for instance, \cite[Example 3.2.16]{FCWK}).

In this paper, we consider a linear control system $\Sigma_{\R^2}$ on the plane satisfying the LARC and whose associated matrix $A$ has a pair of real eigenvalues. Our aim is twofold. Firstly, under the assumption that $0\in\inner\Omega$, explicitly characterize the unique control set $\CC_{\R^2}$ in a computable way. Secondly, analyze what happens under the assumption that $0\notin\inner\Omega$. Recently in \cite{VAAS}, the authors explicitly computed the unique control set with a nonempty interior for the case where the associated matrix $A$ has a pair of complex eigenvalues. It turns out that the closure of the control set coincides with the region delimited by a computable periodic orbit $\OC$ of the system. Moreover, the assumption on $0\in\inner\inner\Omega$ has no meaning. In the real case, however, such a condition can have a strong influence if $\det A=0$, as shown in Theorems 3.1 and 3.3.

It is worth pointing out that the results presented here are not limited to dimension two. In fact, for a general LCS on $\R^n$, if the matrix $A$ is semisimple, i.e., its complexification is diagonalizable, the system can be decomposed into blocks of dimensions $1$ and $2$ determined by the eigenvalues of $A$. In particular, it induces a linear control system on any A-invariant plane. Its restriction to such a plane is one of the systems considered in the present paper or in \cite{VAAS}. Let us also mention that this theory has been generalized to the class of linear control systems on Lie groups (see \cite{AMS}).

The article is organized as follows: after the preliminaries, Section 2 introduces the definition of a linear control system on the plane, the positive and negative orbits, and the notion of control sets. The last section, Section 3, studied the system's control sets and the controllability property. Here, we introduce our central hypothesis: the eigenvalues of $A$ are real. Recall that the complex cases were treated in \cite{VAAS}. Under this assumption, the analysis will be divided by considering the possibilities for the determinant and trace of A. It turns out that the different cases come from the relative position of the control ${\bf u}\equiv 0$ concerning the set $\Omega$: inside of $\Omega$ as an interior point, in the boundary of $\Omega$, and outside of $\Omega$. There are four cases to consider: $\det A=0$ and $\tr A=0$; $\det A=0$ and $\tr A$ different from zero. The other two cases come from $\det A$ being different from zero with $\tr A<0$ and $\tr A>0$. All the existing control sets are explicitly described.

\section{Linear control systems on $\R^2$ and control sets}

A linear control system (LCS) on $\R^2$ is determined by the family of ODEs
	\begin{flalign*}
		&&\dot{v}(t)=Av(t)+u(t)\eta, \;\;\;\;u(t)\in\Omega, \hspace{.5cm}t\in\R,&&\hspace{-1cm}\left(\Sigma_{\R^2}\right)
	\end{flalign*}
where $A\in\mathfrak{gl}(2)$, the {\it control range} $\Omega:=[u^-, u^+]$ with $u^-<u^+$, and $\eta\in\R^2$ is a nonzero vector. 

The family of the {\it control functions} $\UC$ is the set of all {\it piecewise constant functions} whose image is contained in $\Omega$. The {\it solution} of the system $\Sigma_{\R^2}$ starting at the point $v\in\R^2$, associated with the control $ {\bf u}\in\UC$ is the unique piecewise differentiable curve $s\in\R\mapsto\varphi(s, v, {\bf u})$ satisfying 
$$\frac{d}{ds}\varphi(s, v, {\bf u})=A\varphi(s, v, {\bf u})+{\bf u}(s)\eta.$$
By the very definition, any solution of $\Sigma_{\R^2}$ is obtained by concatenation of constant control functions. In particular, when $\det A\neq 0$ the solutions for constant controls $u\in\Omega$ are given by
\begin{equation}
    \label{solutionneqzero}
    \varphi(t, v, u)=\rme^{t A}(v-v(u))+v(u), \hspace{.5cm}\mbox{ where } \hspace{.5cm}v(u):=-uA^{-1}\eta,
\end{equation}
are the equilibria of the system.

For $v\in\R^2$, we define the  {\it positive} and {\it negative orbits} of $\Sigma_{\R^2}$, respectively, as the sets 
$$\OC^+(v):=\{\varphi(t, v, {\bf u}), \;t\geq 0, {\bf u}\in\UC\}\hspace{1cm}\mbox{ and }\hspace{1cm}\OC^-(v):=\{\varphi(t, v, {\bf u}), \;t\leq 0, {\bf u}\in\UC\}.$$

We will say that $\Sigma_{\R^2}$ satisfies the LARC if $\langle A\zeta, \theta\zeta\rangle\neq 0$, where $\theta$ stands for the counter-clockwise rotation of $\frac{\pi}{2}$-degrees. Equivalently, $\Sigma_{\R^2}$ satisfies the LARC if and only if $\zeta$ is not an eigenvector of $A$. Such an assumption assures, in particular, that the positive and negative orbits have a nonempty interior.

\begin{definition}
\label{control}
A nonempty subset $\CC\subset\R^2$ is called a {\it control set} of $\Sigma_{\R^2}$ if it satisfies:
\begin{enumerate}
    \item For any $v\in \CC$ there exists ${\bf u}\in\UC$ such that $\varphi(\R^+, v, {\bf u})\subset \CC$;
    \item For any $v\in \CC$ it holds that $\CC\subset\overline{\OC^+(v)}$;
    \item $\CC$ is maximal w.r.t. set inclusion satisfying 1. and 2. 
\end{enumerate}

\end{definition}

We say that $\Sigma_{\R^2}$ is {\it controllable} if $\CC=\R^2$ is the only control set of $\Sigma_{\R^2}$.

The following result states that subsets with a nonempty interior that are maximal with the second condition in \ref{control} are control sets. We will use this fact to prove our main results. Its proof can be found at \cite[Proposition 3.2.4]{FCWK}.

\begin{proposition}
    \label{int}
    Let $D\subset\R^2$ be a set that is maximal with the property that for all $v\in D$ one has $D\subset\overline{\OC^+(v)}$ and suppose that the interior of $D$ is nonvoid. Then $D$ is a control set.
\end{proposition}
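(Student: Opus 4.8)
The plan is to prove the statement of Proposition~\ref{int}, namely that a set $D\subset\R^2$ which is maximal with respect to the second control-set property and which has nonempty interior is automatically a control set. The only thing missing, compared with Definition~\ref{control}, is the first condition: for every $v\in D$ there must exist a control ${\bf u}\in\UC$ with $\varphi(\R^+,v,{\bf u})\subset D$. So the whole proof reduces to producing such controls, and then checking that the maximality of $D$ as given already implies the maximality required in Definition~\ref{control}.

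The approach I would take is the standard one from Colonius--Kliemann (which the cited \cite[Proposition 3.2.4]{FCWK} formalizes). First I would fix a point $w$ in the interior of $D$ and observe that, by the defining property of $D$, for every $v\in D$ we have $v\in\overline{\OC^+(\cdot)}$-relations linking all points of $D$; in particular $w\in\overline{\OC^+(v)}$ for every $v\in D$, and also $D\subset\overline{\OC^+(w)}$. Because $w$ lies in the \emph{interior} of $D$ and reachable sets of $\Sigma_{\R^2}$ under the LARC have nonempty interior (as remarked in the text before Definition~\ref{control}), an open-mapping / accessibility argument shows that one can actually steer from any $v\in D$ into the interior of $D$ in finite time: approximate reachability of the interior point $w$ upgrades to exact reachability of a nearby interior point. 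Symmetrically, from any interior point of $D$ one can reach (approximately, hence — using interior points — exactly) any other point of $\inner D$. Concatenating these controls, one sees that from any $v\in D$ there is a control driving the trajectory into $\inner D$ and then keeping it inside $D$ for all future time by chaining together finite pieces that each stay in $\inner D$; a limiting/compactness argument on concatenations produces a single ${\bf u}\in\UC$ with $\varphi(\R^+,v,{\bf u})\subset D$. This is condition~1.

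For condition~3, suppose $D'\supset D$ also satisfies conditions~1 and~2 of Definition~\ref{control}. Then in particular $D'$ satisfies the weaker property defining $D$ (every point of $D'$ has $D'$ in the closure of its positive orbit), and $D'\supset D$ has nonempty interior; by the assumed maximality of $D$ for that property we get $D'=D$. Hence $D$ is maximal in the sense of Definition~\ref{control}, and together with conditions~1 and~2 this makes $D$ a control set.

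The main obstacle, and the only genuinely substantive point, is the passage from \emph{approximate} reachability (which is all that the defining property $D\subset\overline{\OC^+(v)}$ gives us) to the \emph{exact} controllability statements needed to build a trajectory that literally stays inside $D$ for all $t\ge 0$ rather than merely accumulating on $D$. This is precisely where the nonempty interior of $D$ and the nonempty interior of the orbits (a consequence of the LARC) are used: being able to hit an open set exactly, not just approximately. I would lean on the cited \cite[Proposition 3.2.4]{FCWK} for this core accessibility lemma rather than redeveloping the open-mapping argument for control systems from scratch, since the present paper only needs the statement as a black box to prove its main theorems.
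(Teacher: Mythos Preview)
Your proposal is correct and in fact goes further than the paper itself: the paper does not prove Proposition~\ref{int} at all but simply refers the reader to \cite[Proposition~3.2.4]{FCWK}, which is exactly the source you invoke. Your sketch of the Colonius--Kliemann argument (upgrading approximate reachability to exact reachability via the nonempty interior and the LARC, then concatenating to produce the required trajectory, and finally deducing maximality) is the standard proof of that cited proposition, so your approach and the paper's coincide.
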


In the next section, we analyze the dynamics of the solutions of $\Sigma_{\R^2}$ to obtain a characterization of its control sets. In particular, the assumption that $0\in\inner\Omega$ plays an essential role in the shape of the control sets.

\section{Controllability and the control sets of $\Sigma_{\R^2}$.}

In this section, we study the control sets and the controllability property of $\Sigma_{\R^2}$. We will assume here that the eigenvalues of $A$ are real, since the complex case was treated in \cite{VAAS}. Under this assumption, the analysis will be divided by considering the possibilities for the determinant and trace of $A$.

\subsection{The case $\det A=0$.}

In this section we treat the case where $\det A=0$. In order to do that, we analyze the two subcases related with the trace of $A$.

\subsubsection{The subcase $\tr A=0$}

In this case, the LARC implies that $\{A\zeta, \zeta\}$ is a basis, and on such a basis, the matrix $A$ is written as
$A=\left(\begin{array}{cc}
    0 &  1\\
    0 & 0
\end{array}\right).$

Moreover, the solutions of $\Sigma_{\R^2}$ for constant controls, are given as
$$\varphi(t, v_0, u)=v_0+t(Av_0+u\zeta)+u\frac{t^2}{2}A\zeta,$$
which, on the previous basis, have the form
$$\varphi(t, v_0, u)=\left(x_0+ty_0+u\frac{t^2}{2}, y_0+ut\right), \hspace{.5cm}\mbox{ where }\hspace{.5cm}v_0=(x_0, y_0).$$
For any $u\in\Omega$, let us define the function 
    $$F_u:\R^2\rightarrow\R, \hspace{1cm}F_u(x, y):=y^2-2ux.$$
    It holds that 
    $$F_u(\varphi(t, v_0, u))=(y_0+ut)^2-2u\left(x_0+ty_0+u\frac{t^2}{2}\right)=y_0^2+2y_0ut+u^2t^2-(2ux_0+2uy_0t+u^2t^2)$$
    $$=y_0^2-2ux_0=F_u(v_0).$$
    It is not hard to see that the image of the curve 
    $t\in\R\mapsto \varphi(t, v_0, u)$ coincides with $F_u^{-1}(F_u(v_0))$.

\begin{theorem}
\label{polynomial}
    If the LCS $\Sigma_{\R^2}$ satisfies the LARC and $\det A=\tr A=0$, it holds:
    \begin{itemize}
        \item[(a)] $0\in\inner\Omega$ and $\Sigma_{\R^2}$ is controllable;
        \item[(b)] $0\in\partial\Omega$ and $\R\cdot A\zeta$ is a continuum of one-point control sets.
    \item[(c)] $0\notin\Omega$ and $\Sigma_{\R^2}$ does not admit any control set.
    \end{itemize}
\end{theorem}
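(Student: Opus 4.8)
The hypotheses of (a), (b) and (c) are mutually exclusive and jointly exhaustive, since $0\in\R$ lies either in the open interval $\inner\Omega=(u^-,u^+)$, in $\partial\Omega=\{u^-,u^+\}$, or outside $\Omega=[u^-,u^+]$. I will treat the three cases separately, always working in the coordinates given by the basis $\{A\zeta,\zeta\}$, in which $A^2=0$, a point is written $v=(x,y)$, the system reads $\dot x=y$, $\dot y=u$, the half-line $\R\cdot A\zeta$ is the $x$-axis $\{y=0\}$, and the functions $F_u$ introduced above are first integrals of the constant-$u$ flow. Two elementary facts will be used repeatedly: along any trajectory the sign of $\dot y$ equals the sign of the applied control, and for a constant arc the $x$-coordinate is pinned down by $F_u$-invariance once the endpoints' heights are known.

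\textbf{Case (a): $u^-<0<u^+$.} I claim $\OC^+(v)=\R^2$ for every $v$. This gives controllability immediately: $\R^2$ then satisfies conditions 1 and 2 of Definition~\ref{control} (condition 1 being vacuous) and is trivially maximal, hence is a control set, and since every set satisfying conditions 1--2 is contained in $\R^2$, it is the only one. Exact reachability is the classical controllability of the double integrator whose control range straddles $0$; explicitly, to steer $v_0=(x_0,y_0)$ to $v_1=(x_1,y_1)$: first apply $u^-$ to descend to a height $-N$ and then $u^+$ to climb to the target height $y_1$, so that by $F_{u^\pm}$-invariance the $x$-coordinate acquires a term $\asymp N^{2}\bigl(\tfrac1{2u^-}-\tfrac1{2u^+}\bigr)$, which tends to $-\infty$ because $u^+-u^->0$ while $u^+u^-<0$; choosing $N$ large one lands at a point $(x_B,y_1)$ with $x_B$ as negative as desired. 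From $(x_B,y_1)$ one then runs a loop: up with $u^+$ to height $Y^*:=|y_1|+1$, then the control $0$ (available since $0\in\Omega$) for a time $\tau\ge 0$, which translates the state rightward at the positive speed $Y^*$ without changing the height, then down with $u^-$ back to $y_1$. A short computation with $F_{u^\pm}$-invariance shows the terminal abscissa equals $x_B+C+Y^*\tau$ with $C>0$ depending only on $y_1,u^\pm$, so $\tau=(x_1-x_B-C)/Y^*\ge 0$ works once $x_B\le x_1-C$. Hence $v_1\in\OC^+(v_0)$.

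\textbf{Case (b): $0\in\partial\Omega$.} By the symmetry $(x,y,u)\mapsto(-x,-y,-u)$ of the system, which maps orbits to orbits and fixes the $x$-axis setwise, we may assume $\Omega=[0,u^+]$ with $u^+>0$; then $\dot y=u\ge 0$, so $y$ is nondecreasing along every trajectory. Fix $w_0=(x_0,0)$. With the control $0$ the point $w_0$ is an equilibrium, so $\{w_0\}$ satisfies conditions 1 and 2 of Definition~\ref{control}. For maximality, suppose $\CC\supsetneq\{w_0\}$ satisfies conditions 1--2 and pick $w\ne w_0$ in $\CC$. Using $F_u$-invariance along the constant arcs together with the fact that once $y>0$ one has $\dot x=y>0$ and $\dot y\ge 0$, one computes $\overline{\OC^+(w_0)}=\{(x,y):y\ge 0,\ x\ge x_0+y^2/(2u^+)\}$; hence $w=(x_w,y_w)$ has $y_w\ge 0$, and $x_w>x_0$ when $y_w=0$. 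But $\overline{\OC^+(w)}\subset\{y\ge y_w\}$ if $y_w>0$ and $\overline{\OC^+(w)}\subset\{x\ge x_w\}$ if $y_w=0$; in either situation $w_0\notin\overline{\OC^+(w)}$, contradicting condition 2 for $\CC$. Thus every $\{w_0\}$ with $w_0\in\R\cdot A\zeta$ is a control set, yielding the asserted continuum. (The same description of forward orbits shows that a set satisfying 1--2 must lie at a single height, and that this height must be $0$, so these singletons are in fact all the control sets.)

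\textbf{Case (c): $0\notin\Omega$.} Again by the symmetry above we may assume $0<u^-<u^+$, so $\dot y=u\ge u^->0$ and $y$ is strictly increasing, indeed unbounded, along every trajectory. If a control set $\CC$ existed, take $v_0\in\CC$; by condition 1 there is a control keeping the state in $\CC$ for all positive times, along which the height tends to $+\infty$, so $\CC$ contains a point $v_1$ with $y(v_1)>y(v_0)$. Then condition 2 forces $\CC\subset\overline{\OC^+(v_1)}\subset\{y\ge y(v_1)\}$, which excludes $v_0$ — a contradiction. Hence $\Sigma_{\R^2}$ has no control set. I expect the routine parts to be (c) and the check that the singletons in (b) meet conditions 1--2; the real work lies in the explicit multi-arc steering of (a) — the delicate point being to keep every arc-time nonnegative despite the control range only straddling $0$, which forces the detour to a very negative abscissa — and in the exact computation of $\overline{\OC^+}$ in (b), which is what makes the singletons maximal. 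Both reduce, however, to bookkeeping with the first integrals $F_u$ and the monotonicity of $y$.
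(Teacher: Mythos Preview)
Your proof is correct in all three cases. The overall strategy---exploit the first integrals $F_u$ together with the monotonicity of the second coordinate when $0\notin\inner\Omega$---is the same as the paper's, but the execution in part (a) is genuinely different. The paper argues geometrically with the parabolic regions $\PC^+_{u,v}=\{F_u<F_u(v)\}$: it first steers $v_0$ into $\PC^+_{u_1,v_1}$ (using $u_1$ followed by the zero control to drive $F_{u_1}$ to $-\infty$), then uses $u_0$ to reach the boundary parabola through $v_1$, and finishes with a case split on whether $v_1$ lies in $\PC^+_{u_0,v_0'}$. Your construction instead produces an explicit five-arc path $u^-,u^+,u^+,0,u^-$: the first two arcs dump the state at the target height $y_1$ with abscissa $x_B$ as negative as desired (via the coefficient $N^2\bigl(\tfrac{1}{2u^-}-\tfrac{1}{2u^+}\bigr)<0$), and the last three implement a controlled rightward translation at the fixed positive height $Y^*=|y_1|+1$. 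Your route avoids the case distinction and makes the nonnegativity of each arc-time transparent; the paper's route is more pictorial and generalizes more readily if one drops the zero control from the middle of the argument. For (b) and (c) the two proofs are essentially the same---the paper compresses both into the single observation $\varphi_2(t,v,{\bf u})=y+\sum u_it_i$, while you additionally compute $\overline{\OC^+(w_0)}$ exactly in (b), which makes the maximality check (and the claim that the singletons exhaust the control sets) more explicit than in the paper.
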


\begin{proof} (a) For $u\in\Omega$ and $v\in\R^2$, the interior of the region is delimited by the parabola $F_{u}^{-1}(F_u(v))$ is the set
   $$\PC_{u, v}^+:=\{w\in\R^2; F_{u}(w)< F_{u}(v)\}.$$
    Let $u_0<0<u_1$ and consider $v_0, v_1\in\R^2$ arbitrary points. A trajectory connecting $v_0$ to $v_1$ is constructed as follows:
    
    \begin{enumerate}
        \item From $v_0$ to $v_0'\in \PC_{u_1, v_1}^+$: 
        
        By considering $t_0>0$ large enough, it holds that
        $$\varphi_2(t_0, v_0, u_0)=y_0+u_1t_0>0, \hspace{.5cm} \mbox{ since }\hspace{.5cm}u_1>0.$$
        Now, with control constant equal to zero we get that
        $$F_{u_1}(\varphi(t, \varphi(t_0, v_0, u_1), 0)=[\varphi_2(t_0, v_0, u_1)]^2-2u_1[\varphi_1(t_0, v_0, u_1)+t\varphi_2(t_0, v_0, u_1)]$$
        $$=F_{u_1}(\varphi(t_0, v_0, u_0))-2u_1 t\varphi_2(t_0, v_0, u_0)\rightarrow-\infty, \hspace{.5cm}\mbox{ as }\hspace{.5cm}t\rightarrow+\infty.$$
        Consequently, there exists $t_1>0$ such that 
        $$F_{u_1}(\varphi(t_1, \varphi(t_0, v_0, u_1), 0))< F_{u_1}(v_1)\hspace{.5cm}\iff\hspace{.5cm}v_0':=\varphi(t_1, \varphi(t_0, v_0, u_1), 0)\in\PC^+_{u_1, v_1}.$$

\item From $v'_0\in \PC_{u_1, v_1}^+$ to $v_1$:    Since  
		$$t\rightarrow+\infty \hspace{.5cm}\implies\hspace{.5cm}\varphi_1(t, v'_0, u_0)\rightarrow-\infty\hspace{.5cm}\mbox{ and }\hspace{.5cm}\varphi_2(t, v'_0, \omega_0)\rightarrow-\infty,$$
	 there exists $t_2>0$ such that $v_0'':=\varphi(t_2, v'_0, u_0)$ belongs to the boundary $\partial \PC_{u_1, v_1}^+= F_{u_1}^{-1}(F_{u_1}(v_1))$ and we have to consider the following possibilities:

  \begin{itemize}
      \item[(i)]  $v_1\in \PC_{u_0, v_0'}^+$: In this case, we necessarily have $\varphi_2(t_2, v'_0, u_0)\leq y_1$. Moreover, the fact that the solution $t\mapsto \varphi(t, v''_0, u_1)$ lies on the parabola $F_{u_1}^{-1}(F_{u_1}(v_1))$ and that
	    $$t\rightarrow+\infty\hspace{.5cm}\implies\hspace{.5cm} y_0''+u_1t=\varphi_2(t, v_0'', u_1)\rightarrow+\infty,$$
	  imply the existence of $t_3>0$ such that $\varphi_2(t_3, v''_0, u_1)=y_1$, and hence, $\varphi(t_3, v''_0, u_1)=v_1$. By concatenation, we obtain a trajectory from $v_0$ to $v_1$.
		
		\item[(ii)]  $v_1\notin \PC_{u_0, v_0'}^+$: In this case, $F_{u_0}(v_1)\geq F_{u_0}(v'_0)$ and $$t\rightarrow+\infty\hspace{.5cm}\implies\hspace{.5cm}\varphi_1(t, v_0'', u_1)\rightarrow+\infty\hspace{.5cm}\mbox{ and }\hspace{.5cm}\varphi_2(t, v_0'', u_1)\rightarrow+\infty$$
  imply the existence of $t_3>0$ such that 
  $$\varphi(t_3, v_0'', u_1)\in F_{u_0}^{-1}(F_{u_0}(v_1))\cap F_{u_1}^{-1}(F_{u_1}(v_1)).$$
  Since $v_1$ also belongs to this intersection, there exists $t_4\geq 0$ such that 
  $$\varphi(t_4, \varphi(t_3, v_0'', u_1), u_0)=v_1,$$
  and again, by concatenation, we get the desired trajectory, showing the assertion.
   \end{itemize}
  \end{enumerate}

  (b) We start by noticing that, if $0\in\Omega$, the singleton $\{\alpha A\zeta\}$ satisfies conditions (a) and (b) in Definition \ref{control}  for any $\alpha\in\R$. Therefore, there exists a control set $\CC_{\alpha}$ with $\alpha A\zeta\in\CC_{\alpha}$. Now, for any ${\bf u}\in\UC$ and $t>0$, there exists $t_1, \ldots, t_n>0$ such that 
  $${\bf u}(s)=u_i\in\Omega\hspace{.5cm}\mbox{ for }\hspace{.5cm} s\in \left[\sum_{j=0}^{i-1}t_j, \sum_{j=0}^{i}t_j\right)\hspace{.5cm}\mbox{ with  }\hspace{.5cm}t_0=0\hspace{.5cm}\mbox{ and }\hspace{.5cm} t=\sum_{j=0}^{n}t_j.$$
  Therefore, if $v=(x, y)$, then
  \begin{equation}
      \label{solution2}
  \varphi_2(t, v, {\bf u})=y+\sum_{j=1}^nu_it_i.
    \end{equation}
    Consequently, if $0\in\partial\Omega$,  equation (\ref{solution2}) implies that
  $$\varphi(t^0_k, v_0, {\bf u}^0_k)\rightarrow v_1, \hspace{.5cm}\mbox{ and }\hspace{.5cm}\varphi(t^1_k, v_1, {\bf u}^1_k)\rightarrow v_0, \hspace{.5cm}\mbox{ with }\hspace{.5cm}t_k^0, t_k^1\geq 0\hspace{.5cm}\implies\hspace{.5cm}v_0=v_1\in \R\times\{0\},$$
  and hence $\CC_{\alpha}=\{\alpha A^{-1}\zeta\}$ are the control sets of $\Sigma_{\R^2}$.

  \bigskip

  (c) In the same way, by using equation (\ref{solution2}), we conclude that 
  if $0\notin\Omega$, there are no points satisfying 
  $$\phi(t^0_k, v_0, {\bf u}^0_k)\rightarrow v_1\hspace{.5cm}\mbox{ and }\hspace{.5cm}\phi(t^1_k, v_1, {\bf u}^1_k)\rightarrow v_0,$$
  that is, $\Sigma_{\R^2}$ admits no control set. 
\end{proof}

\begin{remark}
\label{eq}
    The proof of items (b) and (c) basically shows that the condition $0\notin\inner\Omega$ implies that the system $\Sigma_{\R^2}$ has no recurrent points other than the equilibria of the system.
\end{remark}

\subsubsection{The subcase $\tr A\neq 0$}

In this case, there exists an orthonormal basis $\{\mathbf{e}_1, \mathbf{e}_2\}$ of $\R^2$ where $A$ is written as $A=\left(\begin{array}{cc}
    \mu &  0\\
    0 & 0
\end{array}\right).$ The solutions of $\Sigma_{\R^2}$ for constant controls are given as
$$\varphi(t, v_0, u)=\left(\rme^{\mu t}\left(x_0+u\frac{\zeta_1}{\mu}\right)-u\frac{\zeta_1}{\mu}, y_0+u\zeta_2 t\right), \hspace{.5cm}\mbox{ where }\hspace{.5cm}v_0=(x_0, y_0).$$
For this choice of basis, the LARC is equivalent to $\zeta_1\zeta_2\neq 0$.

\begin{theorem}
    If the LCS $\Sigma_{\R^2}$ satisfies the LARC, $\det A=0$ and $\tr A=0$, it holds:
    \begin{itemize}
        \item[(a)] $0\in\inner\Omega$ and $\Sigma_{\R^2}$ admit a unique control set $\CC_{\R^2}$ which, in some basis, is written as
    $$\CC_{\R^2}=-\frac{\zeta_1}{\mu}\Omega\times\R\hspace{.5cm} \mbox{ if }\hspace{.5cm}\mu<0 \hspace{.5cm} \mbox{ and }\hspace{.5cm}\CC_{\R^2}=-\frac{\zeta_1}{\mu}\inner\Omega\times\R\hspace{.5cm} \mbox{ if }\hspace{.5cm}\mu>0.$$

\item[(b)] $0\in\partial\Omega$ and $\R\cdot \mathbf{e}_2$ is a continuum of one-point control sets.
    \item[(c)] $0\notin\Omega$ and $\Sigma_{\R^2}$ does not admit any control set.
    
    \end{itemize}

\end{theorem}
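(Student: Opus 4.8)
I would read the hypothesis as $\tr A\neq 0$, matching the subsection heading: in the stated basis $A=\operatorname{diag}(\mu,0)$ with $\mu=\tr A\neq 0$ and $\zeta_1\zeta_2\neq 0$. The backbone of the argument is that in this basis the system splits into the scalar integrator $\dot y=\zeta_2 u$ and the scalar affine equation $\dot x=\mu x+\zeta_1 u$, whose equilibria for $u\in\Omega$ are the points $-\tfrac{\zeta_1}{\mu}u$ and fill the interval $I:=-\tfrac{\zeta_1}{\mu}\Omega$; importantly, \emph{neither} right-hand side depends on $y$. Concatenating constant controls $u_i\in\Omega$ over intervals of length $t_i$, the displayed solution formula gives $\varphi_2(t,(x_0,y_0),\mathbf{u})=y_0+\zeta_2\sum_i u_it_i$, so the second coordinate is monotone in the natural time parameter as soon as all admissible controls share a sign. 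This settles (b) and (c) along the lines of Theorem \ref{polynomial}: when $0\in\partial\Omega$ (resp.\ $0\notin\Omega$) every $u\in\Omega$ has a fixed weak (resp.\ strict) sign, so $\varphi_2$ is monotone along each trajectory and, by Remark \ref{eq}, the only possible members of a control set are the equilibria of $\Sigma_{\R^2}$, i.e.\ the fixed points of $u\equiv 0$, which form the line $\ker A=\R\cdot\mathbf{e}_2$ (available precisely when $0\in\Omega$). For (c) there is no equilibrium and, as in Theorem \ref{polynomial}(c), no approximate $2$-cycle $\varphi(t^0_k,v_0,\mathbf{u}^0_k)\to v_1$, $\varphi(t^1_k,v_1,\mathbf{u}^1_k)\to v_0$, so no control set exists. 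For (b) each $\{c\,\mathbf{e}_2\}$ is an equilibrium, hence satisfies Definition \ref{control}(1)--(2) and lies in a control set $\CC_c$; and $\CC_c=\{c\,\mathbf{e}_2\}$, because if $w\in\CC_c$ then Definition \ref{control}(2) puts $w$ and $c\,\mathbf{e}_2$ in each other's closed forward orbit, so monotonicity of $\varphi_2$ forces $w$ to have second coordinate $c$, and the flow $\dot x=\mu x$ of $u\equiv 0$, together with the fact that any deviation from $u\equiv 0$ alters $y$ irreversibly, forces $w\in\R\cdot\mathbf{e}_2$. Hence $\R\cdot\mathbf{e}_2$ is exactly the asserted continuum of one-point control sets.

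For (a), since LARC holds and $0\in\inner\Omega$, the general $\R^n$ result recalled in the Introduction already provides a unique control set $\CC_{\R^2}$ with nonempty interior, so it only remains to identify it. I would set $D:=I\times\R$ if $\mu<0$ and $D:=\inner I\times\R$ if $\mu>0$, noting that $0\in\inner\Omega$ gives $0\in\inner I$, so $\dot x=\mu x+\zeta_1 u$ has equilibria strictly on either side of $0$ and there the drift $\zeta_2 u$ is nonzero, with opposite signs on the two sides. The plan is to show that $D\subseteq\overline{\OC^+(v)}$ for every $v\in D$ and that $D$ is maximal with this property; since $\inner D\neq\emptyset$, Proposition \ref{int} then makes $D$ a control set, and uniqueness gives $\CC_{\R^2}=D$. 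The heart is exact controllability inside $\inner D$. First, $\dot x=\mu x+\zeta_1 u$ steers exactly between any $x_0,x_1\in\inner I$ in finite time: use a constant control whose equilibrium lies just below the segment $[x_0,x_1]$ to push $x$ up, just above it to push $x$ down — legitimate for both signs of $\mu$ because the target is interior. To land on a prescribed $y_1$ as well, interleave "holds": steer $x$ to an interior equilibrium $x_\ast$, keep $x\equiv x_\ast$ with the matching constant control for a time $\tau$ (so $y$ changes by $\zeta_2 u_\ast\tau$), then continue. Using one hold at a positive $x_\ast$ and one at a negative $x_\ast$, the two drifts have opposite signs and arbitrary magnitude, while the $y$-increments accumulated by the finitely many finite-time $x$-maneuvers are fixed numbers (using that the $y$-equation does not involve $y$); hence any net change of $y$ is attainable and $(x_1,y_1)$ is reached exactly. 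So $\inner I\times\R$ is completely controllable and $\overline{\OC^+(v)}\supseteq I\times\R\supseteq D$ for $v\in\inner D$; a point of $\partial I\times\R$ (which occurs only for $\mu<0$) first moves into $\inner I$ and then reaches $I\times\R$ in closure, while a target on $\partial I\times\R$ is obtained by steering exactly to a point of $\inner D$ at distance $\delta\to 0$.

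For maximality, let $w\notin D$; its first coordinate then lies outside $I$, or on $\partial I$ when $\mu>0$. If $\mu>0$, the half-plane among $\{x\geq\max I\}$, $\{x\leq\min I\}$ that contains $w$ is forward invariant — e.g.\ from $x\geq\max I$ one has $x\geq-\tfrac{\zeta_1}{\mu}u$ for all $u\in\Omega$, so $\dot x=\mu(x+\tfrac{\zeta_1}{\mu}u)\geq 0$ — hence $\overline{\OC^+(w)}$ misses $\inner I\times\R$ and $D\cup\{w\}$ fails the required property. If $\mu<0$, take instead $v=(\max I,y_w)\in D$ (assuming $w$ has first coordinate $>\max I$; the other case is symmetric): now $\{x\leq\max I\}$ is forward invariant, so $w\notin\overline{\OC^+(v)}$ and again $D\cup\{w\}$ fails the property. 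Thus $D$ is maximal, which completes (a); the same half-plane invariance also shows directly that the first coordinate of $\CC_{\R^2}$ is confined to $I$ (resp.\ $\inner I$).

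The step I expect to be the real obstacle is the controllability inside $\inner D$ when $\mu>0$: in the expanding case one cannot drift $y$ by a large amount using a single sustained control without $x$ leaving $\inner I$, so the "hold and return" construction is genuinely needed, and it works only because an $x$-maneuver between two fixed interior positions shifts $y$ by an amount independent of the starting value of $y$, which can therefore be compensated in advance by the holds. Checking the finitely many concatenations and the $\varepsilon/\delta$ estimates near $\partial I$ for $\mu<0$ is where care is required; the monotonicity arguments for (b) and (c), the half-plane invariance in the maximality step, and the appeal to Proposition \ref{int} are routine.
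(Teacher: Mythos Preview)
Your argument is correct and follows essentially the same route as the paper. Both proofs exploit the decoupling in the basis where $A=\operatorname{diag}(\mu,0)$: the monotonicity of $\varphi_2(t,v,\mathbf{u})=y_0+\zeta_2\sum_i u_it_i$ dispatches (b) and (c) exactly as in Theorem~\ref{polynomial}, and for (a) both proofs obtain exact controllability on $\inner I\times\R$ by holding at equilibria $x_\ast=-\tfrac{\zeta_1}{\mu}u_\ast$ with controls of opposite sign so that the resulting $y$-drifts $\zeta_2 u_\ast$ can realize any net displacement, then invoke Proposition~\ref{int}. Your maximality argument via forward invariance of the half-planes $\{x\geq\max I\}$, $\{x\leq\min I\}$ (for $\mu>0$) and $\{x\leq\max I\}$, $\{x\geq\min I\}$ (for $\mu<0$) is the same computation the paper carries out in (\ref{invariance}), just phrased contrapositively. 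The only noticeable differences are organizational: you appeal to the general $\R^n$ uniqueness result from the Introduction up front, and you treat both signs of $\mu$ explicitly, whereas the paper writes out only $\mu<0$ and leaves $\mu>0$ implicit. One small slip: your description ``equilibrium just below $[x_0,x_1]$ to push $x$ up'' is the $\mu>0$ picture; for $\mu<0$ the roles of ``above'' and ``below'' reverse, though of course the steering between interior points still works.
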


\begin{proof} (a) Let us consider the basis commented on at the beginning of the section and assume $\mu<0$. Since $\displaystyle{-\frac{\zeta_1}{\mu}\Omega\times\R}$ has a nonempty interior, by Proposition \ref{int}, we only have to show that it is maximal satisfying condition 2. in Definition \ref{control}. Assume that $u_1, u_2\in\Omega$ are such that 
    $$-\frac{\zeta_1}{\mu}\Omega\times\R=\left[-u_1\frac{\zeta_1}{\mu}, -u_2\frac{\zeta_1}{\mu}\right]\times\R.$$
    Then, for any $u\in \Omega$ and $t>0$, it holds that
    $$\varphi_1\left(t, \left(-u_1\frac{\zeta_1}{\mu}, y\right), u\right)+u_1\frac{\zeta_1}{\mu}=\rme^{-\mu t}\left(-u_1\frac{\zeta_1}{\mu}+u\frac{\zeta_1}{\mu}\right)-u\frac{\zeta_1}{\mu}+u_1\frac{\zeta_1}{\mu}=(\rme^{\mu t}-1)\frac{\zeta_1}{\mu}(-u_1+u)\geq 0,$$
    and 
    $$\varphi_1\left(t, \left(-u_1\frac{\zeta_1}{\mu}, y\right), u\right)+u_2\frac{\zeta_1}{\mu}=\rme^{-\mu t}\left(-u_2\frac{\zeta_1}{\mu}+u\frac{\zeta_1}{\mu}\right)-u\frac{\zeta_1}{\mu}+u_2\frac{\zeta_1}{\mu}=(\rme^{\mu t}-1)\frac{\zeta_1}{\mu}(-u_2+u)\leq 0,$$
    implying that 
    \begin{equation}
        \label{invariance}
\forall u\in\Omega, t>0, \hspace{1cm}\varphi\left(t, -\frac{\zeta_1}{\mu}\Omega\times\R, u\right)\subset -\frac{\zeta_1}{\mu}\Omega\times\R.
\end{equation}
    As a consequence, 
    \begin{equation}
        \label{1}
        \forall v\in -\frac{\zeta_1}{\mu}\Omega\times\R, \hspace{1cm}\OC^+(v)\subset -\frac{\zeta_1}{\mu}\Omega\times\R.
    \end{equation}
    
On the other hand, let $\displaystyle{v=\left(-u\frac{\zeta_1}{\mu}, y\right), v=\left(-u'\frac{\zeta_1}{\mu}, y'\right)\in-\frac{\zeta_1}{\mu}\inner\Omega}\times\R$  satisfying 
$$-u_1\frac{\zeta_1}{\mu}<-u\frac{\zeta_1}{\mu}<0<-u'\frac{\zeta_1}{\mu}<-u_2\frac{\zeta_1}{\mu}.$$
Since 
$$\varphi_1\left(t, \left(-u\frac{\zeta_1}{\mu}, y\right), u_2\right)\rightarrow -u_2\frac{\zeta_1}{\mu}, \hspace{.5cm}t\rightarrow+\infty,$$
there exists $t_1>0$ such that 
$$\varphi\left(t_1, \left(-u\frac{\zeta_1}{\mu}, y\right), u_2\right)=\left(-u'\frac{\zeta_1}{\mu}, y+\zeta_2u_2t_1\right).
$$
Since the time $t_1$ is independent of $y$, it holds that 
$$\varphi\left(s, \varphi\left(t_1, \varphi\left(t,\left(-u\frac{\zeta_1}{\mu}, y\right) ,u\right) ,u_2\right), u'\right)=\varphi\left(s, \varphi\left(t_1, \left(-u\frac{\zeta_1}{\mu}, y+u\zeta_2t\right) ,u_2\right), u'\right)$$
$$=\varphi\left(s, \left(-u'\frac{\zeta_1}{\mu}, y+u\zeta_2t+u_2\zeta_2t_1\right), u'\right)=\left(-u'\frac{\zeta_1}{\mu}, y+u\zeta_2t+u_2\zeta_2t_1+u'\zeta_2s\right).$$

Moreover, the fact that $uu'<0$ assures the existence of $s, t\geq 0$ such that $y+u\zeta_2t+u_2\zeta_2t_1+u'\zeta_2s=y'$, implying that one can go from $v$ to $v'$ in positive time. Analogously, interchanging the roles of $u$ and $u'$ gives us a trajectory in positive time from $v'$ to $v$. In particular,
\begin{equation}
    \label{2}
    \forall v\in -\frac{\zeta_1}{\mu}\inner\Omega\times\R, \hspace{.5cm }-\frac{\zeta_1}{\mu}\inner\Omega\times\R\subset\OC^+(v).
\end{equation}

By (\ref{1}) and (\ref{2}) we conclude that 
$$\forall v\in -\frac{\zeta_1}{\mu}\Omega\times\R, \hspace{.5cm} -\frac{\zeta_1}{\mu}\Omega\times\R=\overline{\OC^+(v)}.$$
Since the previous equality certainly implies maximality, the result is proved.

\bigskip

   As in the proof of Theorem \ref{polynomial}, if holds that 
$$
  \varphi_2(t, v, {\bf u})=y+\zeta_2\sum_{j=1}^nu_it_i,$$
  where $t_1, \ldots, t_n>0$ and
  $${\bf u}(s)=u_i\in\Omega\hspace{.5cm}\mbox{ for }\hspace{.5cm} s\in \left[\sum_{j=0}^{i-1}t_j, \sum_{j=0}^{i}t_j\right)\hspace{.5cm}\mbox{ with  }\hspace{.5cm}t_0=0\hspace{.5cm}\mbox{ and }\hspace{.5cm} t=\sum_{j=0}^{n}t_j.$$
  Therefore, $0\notin\inner\Omega$ implies that the only possible control sets are the equilibria of the system $\Sigma_{\R^2}$ (see Remark \ref{eq}). Then, $0\in\Omega$ and $\{\alpha\mathbf{e}_2\}, \alpha\in\R$ are the control sets of $\Sigma_{\R^2}$ showing (b), or $0\notin\Omega$ and $\Sigma_{\R^2}$ does not admit any equilibrium point, and hence, no control set.
\end{proof}

\subsection{The case $\det A\neq 0$.}

The present section analyzes the case where the determinant of $A$ is nonzero. As we will show, in this situation, the condition $0\in\Omega$ does not interfere in the system's behavior. The previous analysis will be divided on the possible signs of $\det A$.

\subsubsection{The case $\det A<0$.}

Since the eigenvalues of $A$ are real, the assumption $\det A<0$ forces that $A$ is a diagonalizable and
\begin{equation}
\label{formA1}
    A=\left(\begin{array}{cc}
\mu &0\\
0 & \lambda
\end{array}\right)\hspace{.5cm}\mu \lambda<0.
\end{equation}
in some orthonormal basis. Let us assume w.l.o.g. that in this situation it holds that $\lambda<0<\mu$.

The solutions of $\Sigma_{\R^2}$ for constant controls are given as
$$\phi(t, v_0, u)=\left(\rme^{\mu t}\left(x_0+u\frac{\zeta_1}{\mu}\right)-u\frac{\zeta_1}{\mu}, \rme^{\lambda t}\left(y_0+u\frac{\zeta_2}{\lambda}\right)-u\frac{\zeta_2}{\lambda}\right), \hspace{.5cm}\mbox{ where }\hspace{.5cm}v_0=(x_0, y_0),$$
and as in the previous case, the LARC is equivalent to $\zeta_1\zeta_2\neq 0$.

For any $u\in\Omega$, let us define the function 
    $$G_u:\R^2\rightarrow\R, \hspace{1cm}G_u(x, y):=\left|x+u\frac{\zeta_1}{\mu}\right|^{-\lambda}\left|y+u\frac{\zeta_2}{\lambda}\right|^{\mu},$$
    and note that 
    $$G_u(\phi(t, v_0, u))=\left|\phi_1(t, v_0, u)+u\frac{\zeta_1}{\mu}\right|^{-\lambda}\left|\phi_2(t, v_0, u)+u\frac{\zeta_2}{\lambda}\right|^{\mu}=\rme^{-\mu\lambda t}\left|x_0+u\frac{\zeta_1}{\mu}\right|^{-\lambda}\rme^{\lambda\mu t}\left|y_0-u\frac{\zeta_2}{\lambda}\right|^{\mu}=G_u(v_0).$$

   The next lemma will help us construct closed trajectories of $\Sigma_{\R^2}$.

\begin{lemma}
\label{controlinterior}
 Let $\alpha>0$ and $u_0, u_1\in\partial\Omega$ with $u_0\neq u_1$. For any $v\in \displaystyle{-\frac{\zeta_1}{\mu}\inner\Omega\times-\frac{\zeta_2}{\lambda}\inner\Omega}$ such that $G_{u_1}(v)\geq \alpha$, 
there exists $v_0, v_1\in \displaystyle{G_{u_1}^{-1}(\alpha)\cap G_{u_0}^{-1}\left(G_{u_0}(v)\right)}$ and times $t_0, t_1, t_2\geq 0$ such that 
$$\phi(t_0, v_0, u_0)=v, \hspace{.5cm}\phi(t_1, v, u_0)=v_1\hspace{.5cm}\mbox{ and }\hspace{.5cm}\phi(t_2, v_1, u_1)=v_0.$$
\end{lemma}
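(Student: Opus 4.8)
The statement asserts the existence of a closed trajectory through $v$ made of three pieces: one $u_0$-piece arriving at $v$, one $u_0$-piece leaving $v$, and one $u_1$-piece closing the loop. Since both constant-control flows preserve the corresponding level functions $G_{u_0}$, $G_{u_1}$, the natural strategy is to work on the two level curves $\Gamma_0 := G_{u_0}^{-1}(G_{u_0}(v))$ and $\Gamma_1 := G_{u_1}^{-1}(\alpha)$, find the points $v_0, v_1$ as suitable intersection points of $\Gamma_1$ with $\Gamma_0$, and then verify that the flow actually connects them along the required arcs with nonnegative time. The key geometric observation is that each flow line for control $u$ lies on a level set of $G_u$, which is (a branch of) a curve of the form $|x + u\zeta_1/\mu|^{-\lambda}\,|y + u\zeta_2/\lambda|^{\mu} = \text{const}$, i.e. a ``hyperbola-like'' curve centered at the equilibrium $v(u) = (-u\zeta_1/\mu, -u\zeta_2/\lambda)$, and the flow moves along it expanding in the $\mu>0$ direction and contracting in the $\lambda<0$ direction.

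First I would fix the open box $R := -\tfrac{\zeta_1}{\mu}\inner\Omega \times -\tfrac{\zeta_2}{\lambda}\inner\Omega$ and note that its two ``corners'' relevant here are the equilibria $v(u_0)$ and $v(u_1)$, which are opposite vertices (or at least distinct points) of the closed box. Given $v \in R$ with $G_{u_1}(v) \geq \alpha$, the $u_0$-orbit through $v$ is the branch of $\Gamma_0$ passing through $v$ that stays in the quadrant (relative to $v(u_0)$) determined by $v$; since $\mu > 0 > \lambda$, as $t \to +\infty$ this orbit has $|\phi_1(t,v,u_0) + u_0\zeta_1/\mu| \to \infty$ and $|\phi_2(t,v,u_0)+u_0\zeta_2/\lambda| \to 0$, and as $t \to -\infty$ the reverse. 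The plan is: (1) flow backward from $v$ under $u_0$ to reach a point $v_0$ on $\Gamma_1 \cap \Gamma_0$ — this is possible because moving backward pushes the $y$-coordinate away from $-u_0\zeta_2/\lambda$ and the $x$-coordinate toward $-u_0\zeta_1/\mu$, and I must check this motion meets the curve $G_{u_1} = \alpha$; then $t_0 \geq 0$ is the elapsed time. (2) Flow forward from $v$ under $u_0$ to reach a point $v_1$ on $\Gamma_0 \cap \Gamma_1$ — here forward flow under $u_0$ pushes $x$ away from $-u_0\zeta_1/\mu$ and $y$ toward $-u_0\zeta_2/\lambda$, and again I verify it crosses $G_{u_1} = \alpha$; this gives $t_1 \geq 0$. (3) Flow from $v_1$ under $u_1$ along $\Gamma_1$ (which both $v_0$ and $v_1$ lie on, since both satisfy $G_{u_1} = \alpha$) and check that $v_0$ is reached in forward time $t_2 \geq 0$; this is the step requiring that $v_0$ and $v_1$ sit on the same branch of $\Gamma_1$ and that the $u_1$-flow moves from $v_1$ toward $v_0$ in the expanding ($\mu$) direction.

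The concrete computations are: on the $u_0$-orbit, parametrize by $s = \phi_1 + u_0\zeta_1/\mu = e^{\mu t}(x + u_0\zeta_1/\mu)$ and $r = \phi_2 + u_0\zeta_2/\lambda = e^{\lambda t}(y + u_0\zeta_2/\lambda)$ with $|s|^{-\lambda}|r|^{\mu}$ constant; substitute into $G_{u_1}(x,y) = |x + u_1\zeta_1/\mu|^{-\lambda}|y + u_1\zeta_2/\lambda|^{\mu}$, using $x + u_1\zeta_1/\mu = s + (u_1-u_0)\zeta_1/\mu$ and $y + u_1\zeta_2/\lambda = r + (u_1-u_0)\zeta_2/\lambda$, and study this as a function of $t \in \R$. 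Its limits as $t \to \pm\infty$ are one $+\infty$ (when the coordinate along the expanding direction dominates) and either a finite value or $+\infty$ depending on signs, but in all cases one shows it attains every value $\geq \alpha$ — in particular the condition $G_{u_1}(v) \geq \alpha$ places $v$ in the range so that both a backward crossing ($t_0$) and a forward crossing ($t_1$) exist. For step (3) I'd argue that $v_0$ and $v_1$ were produced on the same connected component (branch) of $\Gamma_1$ as $v$ lies between them, hence the $u_1$-flow, being monotone along that branch, connects $v_1$ to $v_0$ with $t_2 \geq 0$, provided $v_1$ is the one on the ``incoming'' side.

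The main obstacle I anticipate is the bookkeeping of signs and branches: the curves $\Gamma_0$ and $\Gamma_1$ each have several branches (one per quadrant around their respective centers $v(u_0)$, $v(u_1)$), the flows traverse a fixed branch monotonically, and one must make sure the backward-$u_0$ image of $v$, the forward-$u_0$ image of $v$, and the forward-$u_1$ connection all land on consistent branches so that the three arcs concatenate into an actual closed curve through $v$ — and that the elapsed times come out nonnegative rather than forcing a detour. Handling this cleanly probably requires choosing coordinates centered so that $v$ lies in a definite quadrant relative to both $v(u_0)$ and $v(u_1)$ (the hypothesis $v \in R$ with $u_0, u_1 \in \partial\Omega$ distinct is exactly what pins down these relative positions: $v$ is strictly between the two equilibria in each coordinate), and then checking the monotonicity of $t \mapsto G_{u_1}(\phi(t,v,u_0))$ separately on $t \le 0$ and $t \ge 0$. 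Once the branch geometry is set up correctly, each of the three ``there exists a time'' claims reduces to an intermediate value argument on an explicit monotone function, which I would state rather than grind through.
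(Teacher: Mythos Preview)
Your plan is essentially the paper's: slide along the $u_0$-orbit of $v$ (which lies in $\Gamma_0$) in both time directions to locate two crossings $v_0,v_1$ with the level curve $\Gamma_1=G_{u_1}^{-1}(\alpha)$, then close the loop along $\Gamma_1$ with the $u_1$-flow. The paper fixes $u_0=u^-$, $u_1=u^+$ and carries this out concretely.

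Two points where the paper's execution is tighter than your sketch. First, for the intermediate-value step you propose checking monotonicity of $t\mapsto G_{u_1}(\phi(t,v,u_0))$ on each half-line; this is not obvious (the two factors move in opposite directions) and the paper bypasses it entirely. Instead it observes that the forward $u^-$-orbit reaches the vertical line $x=-u^+\zeta_1/\mu$ at a finite time $t_1'$ (a one-line computation with the explicit solution), where $G_{u^+}$ vanishes; symmetrically the backward orbit reaches $y=-u^+\zeta_2/\lambda$ at a finite time. Since $G_{u^+}(v)\geq\alpha>0$, a plain intermediate-value argument on $[0,t_1']$ and on $[-t_0',0]$ produces the crossings with no monotonicity needed. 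Second, for the closing step (your step (3)) the paper avoids abstract branch reasoning: it writes $v_i=(-u_i'\zeta_1/\mu,\,-u_i''\zeta_2/\lambda)$, deduces $u_0'>u_1'$ from $v_1=\phi(t_0+t_1,v_0,u^-)$, solves explicitly for $t_2>0$ with $\phi_1(t_2,v_1,u^+)=-u_0'\zeta_1/\mu$, and then gets the match in the second coordinate for free from $G_{u^+}(v_0)=G_{u^+}(v_1)=\alpha$. This is precisely the coordinate incarnation of your ``same branch, monotone $u_1$-flow'' idea, and it also shows along the way that $v_0,v_1$ remain in the open box, which you would need for the subsequent theorem.
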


\begin{proof}
    Let us consider only the case $u_0=u^-$ and $u_1=u^+$, since the other possibility is analogous.

By the expression of the solutions, we get directly that
$$\phi_1(-t, v, u^-)\in-\frac{\zeta_1}{\lambda}\inner\Omega, 
\hspace{.5cm}\mbox{ and }\hspace{.5cm}\phi_2(t, v, u^-)\in-\frac{\zeta_2}{\lambda}\inner\Omega, \hspace{.5cm}\forall t>0.$$
Also, 
$$\phi_1(t, v, u^-)+u^+\frac{\zeta_1}{\mu}=\frac{\zeta_1}{\mu}\left(\underbrace{u^+-u^--\rme^{\mu t}(u-u^-)}_{=g(t)}\right),$$
with
$$g(0)=u^+-u^->0\hspace{.5cm}\mbox{ and }\hspace{.5cm}\lim_{t\rightarrow+\infty}g(t)=-\infty,$$
and hence, 
$$\phi(t'_1, v, u^-)=\left(-u^+\frac{\zeta_1}{\mu}, y\right), \hspace{.5cm}\mbox{ for some }\hspace{.5cm} t'_1>0\hspace{.5cm}\mbox{ and }\hspace{.5cm}y\in-\frac{\zeta_2}{\lambda}\inner\Omega,$$
Therefore, the continuous curve
$$t\in [0, t'_1]\mapsto G_{u^+}(\phi(t, v, u_0)),$$
is such that
$$G_{u^+}(v)=G_{u^+}(\phi(0, v, u^-))\hspace{.5cm}\mbox{ and }\hspace{.5cm}G_{u^+}(\phi(t'_1, v, u^-))=\left|-u^+\frac{\zeta_1}{\mu}+u^+\frac{\zeta_1}{\mu}\right|^{-\lambda}\left|y_1+u^+\frac{\zeta_2}{\lambda}\right|^{\mu}=0,$$
that together with the inequalities,
$$G_{u^+}(v)\geq\alpha>0=G_{u^+}(\phi(t'_1, v, u^-)),$$
imply the existence of $t_1\in [0, t'_1)$ such that $G_{u^+}(\phi(t_1, v, u^-))=\alpha$. 

Since 
$$G_{u^-}(\phi(t, v, u^-))=G_{u^-}(v), \hspace{.5cm} \forall t\in\R,$$
we get the point
$$v_1:=\phi(t_1, v, u^-)\in G^{-1}_{u^-}\left(G_{u^-}(v_0)\right)\cap G^{-1}_{u^+}\left(\alpha\right).$$

Analogously, 

$$\phi_2(-t, v, u^-)+u^+\frac{\zeta_2}{\lambda}=\frac{\zeta_2}{\lambda}\left(u^+-u^--\rme^{-\lambda t}(u-u^-)\right),$$
implies that
$$\phi(-t'_0, v, u^-)=\left(x, -u^+\frac{\zeta_2}{\lambda}\right), \hspace{.5cm}\mbox{ for some }\hspace{.5cm} t'_0>0\hspace{.5cm}\mbox{ and }\hspace{.5cm}x\in-\frac{\zeta_1}{\mu}\inner\Omega,$$
and hence, there exists $t_0\in [0, t'_0)$ such that 
$$v_0:=\phi(-t_0, v_0, u^-)\in G^{-1}_{u^-}\left(G_{u^-}(v_0)\right)\cap G^{-1}_{u^+}\left(\alpha\right).$$
By concatenation, we get
$$\phi(t_0, v_0, u^-)=\phi(t_0, \phi(-t_0, v, u^-), u^-)=v.$$

Certainly, $v_0\neq v_1$, otherwise, $v=v_0=v_1=-u^-A^{-1}\zeta$, which contradicts our initial assumption on $v$. Moreover, by construction, it holds that
$$v_0, v_1\in -\frac{\zeta_1}{\mu}\inner\Omega\times-\frac{\zeta_2}{\lambda}\inner\Omega.$$
Writing $v_i=\displaystyle{\left(-u'_i\frac{\zeta_1}{\mu}, -u''_i\frac{\zeta_2}{\lambda}\right)}$, we have by the previous that
$$v_1=\phi(t_0+t_1, v_0, u^-)\hspace{.5cm}\iff\hspace{.5cm}-u'_1\frac{\zeta_1}{\mu}=\phi_1(t_0+t_1, v_0, u^-)=\rme^{\mu(t_0+t_1)}\left(-u'_0\frac{\zeta_1}{\mu}+u^-\frac{\zeta_1}{\mu}\right)-u^-\frac{\zeta_1}{\mu}$$
$$\iff \hspace{.5cm}\rme^{\mu(t_0+t_1)}(-u'_0+u^-)=-u'_1+u^-\hspace{.5cm}\iff \hspace{.5cm}-u'_0+u^-<-u'_1+u^-\hspace{.5cm}\iff \hspace{.5cm}u'_0>u'_1.$$
Therefore, 
$$u^-<u'_1<u'_0<u^+\hspace{.5cm}\implies\hspace{.5cm}-u'_0+u^+>-u'_1+u^+,$$
and so, there exists $t_2>0$ such that 
$$\rme^{\mu t_2}(-u'_1+u^+)=-u'_0+u^+\hspace{.5cm}\iff \hspace{.5cm}\rme^{\mu t_2}\left(-u'_1\frac{\zeta_1}{\mu}+u^+\frac{\zeta_1}{\mu}\right)=-u'_0\frac{\zeta_1}{\mu}+u^+\frac{\zeta_1}{\mu},$$
$$\iff\hspace{.5cm}\phi_1(t_2, v_1, u^+)=\rme^{\mu t_2}\left(-u'_1\frac{\zeta_1}{\mu}+u^+\frac{\zeta_1}{\mu}\right)-u^+\frac{\zeta_1}{\mu}=-u'_0\frac{\zeta_1}{\mu}.$$

The previous, together with the equalities
$$G_{u^+}(\phi(t_2, v_1, u^+))=G_{u^+}(v_1),$$
imply that 
$$\phi_2(t_2, v_1, u^+)=-u'_0\frac{\zeta_2}{\lambda}\hspace{.5cm}\implies\hspace{.5cm}\phi(t_2, v_1, u^+)=-v_0,$$
concluding the proof.
\end{proof}
   
\begin{remark}
\label{remark}
Let us note that, in the hypothesis of Lemma \ref{controlinterior}, it holds that 
$$\{v_0, v_1\}=G_{u_1}^{-1}(\alpha)\cap G_{u_0}^{-1}(G_{u_0}(v)).$$
Moreover, $G_{u_1}(v)=\alpha$ implies necessarily that $v=v_0$ or $v=v_1$.
\end{remark}

  We are now in a position to prove the main result of this section.

    \begin{theorem}
    If the LCS $\Sigma_{\R^2}$ satisfies the LARC and $\det A< 0$, it admits a unique control set $\CC_{\R^2}$ satisfying

        $$\CC_{\R^2}=-\frac{\zeta_1}{\mu}\inner\Omega\times-\frac{\zeta_2}{\lambda}\Omega;$$

\end{theorem}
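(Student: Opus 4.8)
Write $D:=\CC_{\R^2}=-\frac{\zeta_1}{\mu}\inner\Omega\times-\frac{\zeta_2}{\lambda}\Omega$, so that $\inner D=-\frac{\zeta_1}{\mu}\inner\Omega\times-\frac{\zeta_2}{\lambda}\inner\Omega$ and $\overline D=-\frac{\zeta_1}{\mu}\Omega\times-\frac{\zeta_2}{\lambda}\Omega$ is the closed rectangle whose two opposite corners are the equilibria $v(u^-)$ and $v(u^+)$. Since $\inner D\neq\emptyset$, by Proposition \ref{int} it is enough to prove that (i) $D\subseteq\overline{\OC^+(v)}$ for every $v\in D$, and (ii) $D$ is maximal with this property; a short extra argument then gives uniqueness among control sets with nonempty interior. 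First I would record two monotonicity facts read off the explicit solutions: writing a point as $(-u_a\frac{\zeta_1}{\mu},-u_b\frac{\zeta_2}{\lambda})$, a flow step with constant control $u$ for time $t\geq 0$ carries $(u_a,u_b)$ to $\bigl(u-e^{\mu t}(u-u_a),\;u+e^{\lambda t}(u_b-u)\bigr)$. Because $\lambda<0$, the second coordinate moves monotonically from $u_b$ to $u$ and stays in any interval containing $u_b$ and $u$, so \textbf{(S1)} every forward trajectory issued from a point with $u_b\in\Omega$ keeps its $y$--coordinate in $-\frac{\zeta_2}{\lambda}\Omega$; because $\mu>0$, the first coordinate moves monotonically \emph{away} from $u$, so \textbf{(S2)} if $u_a\notin\inner\Omega$ then along any control the first coordinate remains on that same side of $\inner\Omega$, whence $\OC^+(v)\cap\inner D=\emptyset$ as soon as the $x$--coordinate of $v$ lies outside $-\frac{\zeta_1}{\mu}\inner\Omega$.

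For (i) the essential point is that $\inner D\subseteq\overline{\OC^+(v)}$ for every $v\in\inner D$, and here I would exploit the periodic trajectories from Lemma \ref{controlinterior}. For $v\in\inner D$ one has $G_{u^+}(v)>0$ (as $v$ lies off the lines $x=-u^+\frac{\zeta_1}{\mu}$ and $y=-u^+\frac{\zeta_2}{\lambda}$), so for each $\alpha\in(0,G_{u^+}(v)]$ the lemma yields a closed orbit through $v$ made of a $u^-$--arc and a $u^+$--arc, with corners $v_0,v_1$ on $G_{u^+}^{-1}(\alpha)\cap G_{u^-}^{-1}(G_{u^-}(v))$. Letting $\alpha\downarrow 0$ one sees that $\OC^+(v)$ contains the whole $G_{u^-}$--level arc through $v$ inside $\overline D$ (forward \emph{and} backward $u^-$--branch), plus the $u^+$--arcs $G_{u^+}^{-1}(\alpha)$ joining $v_1$ to $v_0$. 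A short computation shows that $\frac{d}{dt}\log G_{u^-}$ along the $u^+$--flow (and, symmetrically, $\frac{d}{dt}\log G_{u^+}$ along the $u^-$--flow) changes sign exactly across the equilibrium segment $\{u_a=u_b\}$; since the $G_{u^-}$--arc through $v$ meets both sides of that segment, one can raise or lower $G_{u^-}$ at will by first moving along the arc to the right side of $\{u_a=u_b\}$ and then making a $u^+$--excursion (and symmetrically for $G_{u^+}$). Alternating such excursions with travel along level arcs, one reaches — in the closure — a point sharing the $G_{u^-}$-- and $G_{u^+}$--values of any prescribed $v'\in\inner D$; by Remark \ref{remark} that point may be taken to be $v'$, so $v'\in\overline{\OC^+(v)}$. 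Passing to closures, $\overline D=\overline{\inner D}\subseteq\overline{\OC^+(v)}$, hence $D\subseteq\overline{\OC^+(v)}$, for $v\in\inner D$. For $v\in D\setminus\inner D$ the point $v$ sits on the open top or bottom edge of $\overline D$, at the height of $v(u^+)$ or of $v(u^-)$; flowing a short time with any control $u\neq u^+$ (resp. $u\neq u^-$) moves $v$ into $\inner D$, so (i) for such $v$ reduces to the interior case. I expect this navigation inside $\inner D$ to be the main obstacle.

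For (ii), suppose $\tilde D\supsetneq D$ also has property (i) and pick $w\in\tilde D\setminus D$. If the $y$--coordinate of $w$ is not in $-\frac{\zeta_2}{\lambda}\Omega$, pick any $v\in\inner D\subseteq\tilde D$: by (S1), $\overline{\OC^+(v)}\subseteq\R\times-\frac{\zeta_2}{\lambda}\Omega\not\ni w$, contradicting $\tilde D\subseteq\overline{\OC^+(v)}$. Otherwise the $x$--coordinate of $w$ lies outside $-\frac{\zeta_1}{\mu}\inner\Omega$, so by (S2) $\OC^+(w)\cap\inner D=\emptyset$ and thus $\inner D\not\subseteq\overline{\OC^+(w)}$, contradicting $\inner D\subseteq\tilde D\subseteq\overline{\OC^+(w)}$. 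Hence $D$ is maximal and, by Proposition \ref{int}, a control set.

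Finally, for uniqueness, let $\CC'$ be a control set with $\inner\CC'\neq\emptyset$. Running the two case-analyses of (ii) for $\CC'$ — using for each $w\in\CC'$ that $\CC'\subseteq\overline{\OC^+(w)}$ and $w\in\overline{\OC^+(q)}$ for all $q\in\CC'$, and that (S1)/(S2) would otherwise force $\CC'$ into a single horizontal or vertical line — yields $\CC'\subseteq\overline D$, hence $\inner\CC'\subseteq\inner\overline D=\inner D$; fix $p\in\inner\CC'\subseteq D\cap\CC'$. Using the standard fact that $p\in\overline{\OC^+(q)}$ implies $\overline{\OC^+(p)}\subseteq\overline{\OC^+(q)}$, one checks that $\CC'\cup D$ satisfies conditions 1 and 2 of Definition \ref{control}; maximality of $\CC'$ and of $D$ then forces $\CC'=D$.
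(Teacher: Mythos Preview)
Your overall architecture is the paper's: invoke Proposition~\ref{int}, verify condition~2 on $D$, and deduce maximality from the forward invariance of $\R\times(-\tfrac{\zeta_2}{\lambda}\Omega)$ and the backward invariance of $(-\tfrac{\zeta_1}{\mu}\inner\Omega)\times\R$. Your (S1)/(S2) are exactly those invariances, and your maximality and uniqueness arguments are correct (in fact more explicit than the paper's, which does not spell out uniqueness).

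The weak spot is your step~(i). The paper does not navigate by level sets at all: given any $v,w\in\inner D$ it builds a \emph{single} closed trajectory through both points, using Lemma~\ref{controlinterior} once or twice. After normalizing so that $G_{u^+}(v)\ge G_{u^+}(w)$, one application of the lemma with $\alpha=G_{u^+}(w)$ yields $v_0,v_1\in G_{u^+}^{-1}(G_{u^+}(w))\cap G_{u^-}^{-1}(G_{u^-}(v))$ and the closed loop $v_0\xrightarrow{u^-}v\xrightarrow{u^-}v_1\xrightarrow{u^+}v_0$. If $G_{u^-}(w)\ge G_{u^-}(v)$ then $w$ already sits on the $u^+$--arc $v_1\to v_0$ and you are done; if $G_{u^-}(w)<G_{u^-}(v)$, a second application of the lemma (with the roles of $u^\pm$ interchanged) to $w$ produces a loop $w_0\to w\to w_1\to w_0$ whose $u^+$--arc passes through both $v_0$ and $v_1$, and splicing the two loops gives a closed trajectory through $v$ and $w$. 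This directly yields $w\in\OC^+(v)$ for all $v,w\in\inner D$, with no limiting procedure.

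By contrast, your navigation argument is incomplete at the point you flag yourself. Even granting that you can adjust $G_{u^-}$ and $G_{u^+}$ independently by alternating $u^\pm$--excursions (which requires re-applying the lemma after each excursion to regain access to the full new level arc), matching both values of $v'$ lands you, by Remark~\ref{remark}, on a two-point set $\{v_0',v_1'\}$, and you reach one of them only ``in the closure''. You would still need to argue that from that limit point one can pass to the other intersection point, i.e., essentially re-derive the closed orbit of the lemma. Replacing your navigation by the paper's two-case closed-orbit construction closes the gap and shortens the proof considerably.

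One small correction: on the open top/bottom edge of $D$ it is not true that \emph{any} $u\neq u^\pm$ pushes you into $\inner D$ (the $u_a$--coordinate moves away from $u$ and may exit $\inner\Omega$); but the choice $u=u_a(v)$ fixes the first coordinate and strictly moves the second into $\inner\Omega$, which is all you need.
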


\begin{proof}  Let us start by constructing a closed orbit connecting two arbitrary points $v, w\in \displaystyle{-\frac{\zeta_1}{\mu}\inner\Omega\times-\frac{\zeta_2}{\lambda}\inner\Omega}$. Let us assume that $G_{u^+}(v)\leq G_{u^+}(w)$, otherwise, we change the roles of $v$ and $w$. By Lemma \ref{controlinterior}, there exists 
$$v_0, v_1\in G_{u^{+}}^{-1}(G_{u^+}(w))\cap G_{u^{-}}^{-1}(G_{u^-}(v)),$$
and times $t_0, t_1, t_2\geq 0$, such that 
$$\phi(t_0, v_0, u^-)=v, \hspace{.5cm}\phi(t_1, v, u^-)=v_1\hspace{.5cm}\mbox{ and }\hspace{.5cm}\phi(t_2, v_1, u^+)=v_0.$$

\begin{enumerate}
    \item If $G_{u^{-}}(w)\geq G_{u^-}(v)$ then necessarily $w\in \phi([0, t_2], v_1, u^+)$, implying that the closed trajectory given by the concatenation of the curves 
    $$t\in [0, t_1]\mapsto \phi(t, v_0, u^-)\hspace{.5cm}\mbox{ and }\hspace{.5cm}t\in [0, t_2]\mapsto \phi(t, v_1, u^+),$$
    is a solution of $\Sigma_{\R^2}$ passing through $v$ and $w$ (see Figure \ref{figura3});

    \item  If $G_{u^{-}}(w)< G_{u^-}(v)$, we can apply Lemma \ref{controlinterior} for $\alpha=G_{u^-}(w)$ and $w$ to obtain points $w_0, w_1$ and times $t'_0, t'_1, t_2'\geq 0$ satisfying
   $$\phi(t'_0, w_0, u^-)=w, \hspace{.5cm}\phi(t'_1, w, u^-)=w_1\hspace{.5cm}\mbox{ and }\hspace{.5cm}\phi(t'_2, w_1, u^+)=w_0.$$ 
   Since $G_{u^-}(v_i)\geq G_{u^-}(w_i)$, it holds that  $\phi(T_1, w_1, u^+)=v_1$ and $\phi(T_2, w_1, u^+)=v_0$ for some $T_1, T_2\in (0, t_2')$. Since by Remark \ref{remark} $w=w_0$ or $w=w_1$, the closed trajectory given by the concatenation of the curves 
   $$t\in [0, t_0+t_1]\mapsto \phi(t, v_0, u^-), \hspace{.5cm}t\in [T_1, t'_2]\mapsto \phi(t-T_1, v_1, u^+),\hspace{.5cm}t\in [0, t'_0+t_1']\mapsto \phi(t, w_0, u^-)$$
   $$\mbox{ and }\hspace{.5cm} t\in [0, t'_2-T_2]\mapsto \phi(t, w_1, u^+),$$
   is a solution of $\Sigma_{\R^2}$ passing through $v$ and $w$ (see Figure \ref{figura4}).
\end{enumerate} 

Therefore, $\displaystyle{ -\frac{\zeta_1}{\mu}\inner\Omega\times-\frac{\zeta_2}{\lambda}\Omega}$ has a nonempty interior and satisfies condition 2 in the Definition \ref{control}. Moreover, the fact that 
$$\varphi\left(-t, -\frac{\zeta_1}{\mu}\inner\Omega\times\R, u\right)\subset -\frac{\zeta_1}{\mu}\inner \Omega\times\R\hspace{.5cm}\mbox{ and }\hspace{.5cm}\varphi\left(t, \R\times -\frac{\zeta_2}{\lambda}\Omega, u\right)\subset \R\times -\frac{\zeta_2}{\lambda}\Omega, \hspace{.5cm}\forall t>0,$$
implies directly the maximality of $\displaystyle{ -\frac{\zeta_1}{\mu}\inner\Omega\times-\frac{\zeta_2}{\lambda}\Omega}$, which by Proposition \ref{int} implies that it is a control set and concludes the proof.
\end{proof}

\begin{figure}[h]
	\centering
	\begin{subfigure}{.5\textwidth}
		\centering
		\includegraphics[width=.75\linewidth]{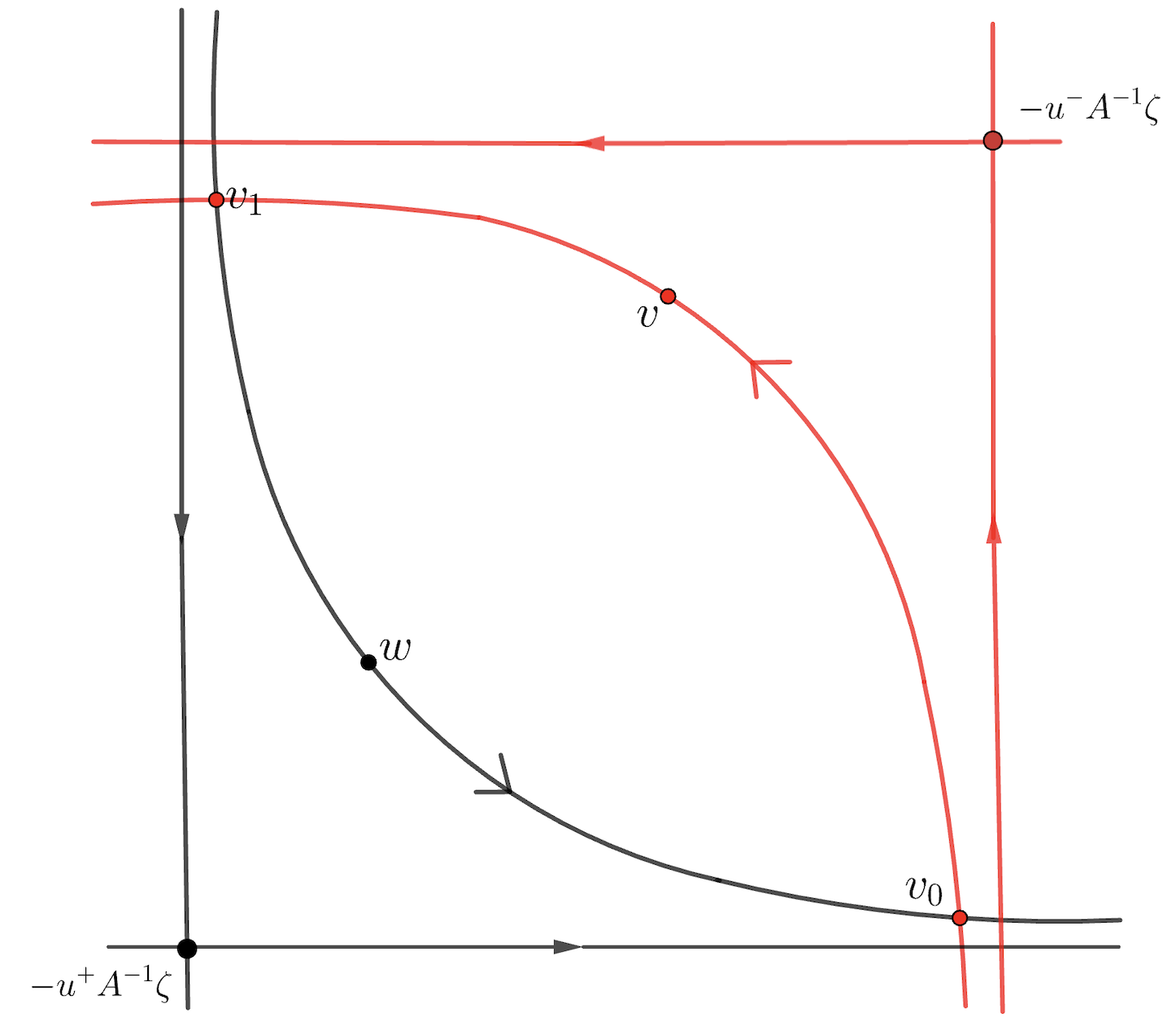}
		\caption{The case $G_{u^-}(w)\geq G_{u^-}(v)$.}
		\label{figura3}
	\end{subfigure}%
	\begin{subfigure}{.5\textwidth}
		\centering
		\includegraphics[width=.8\linewidth]{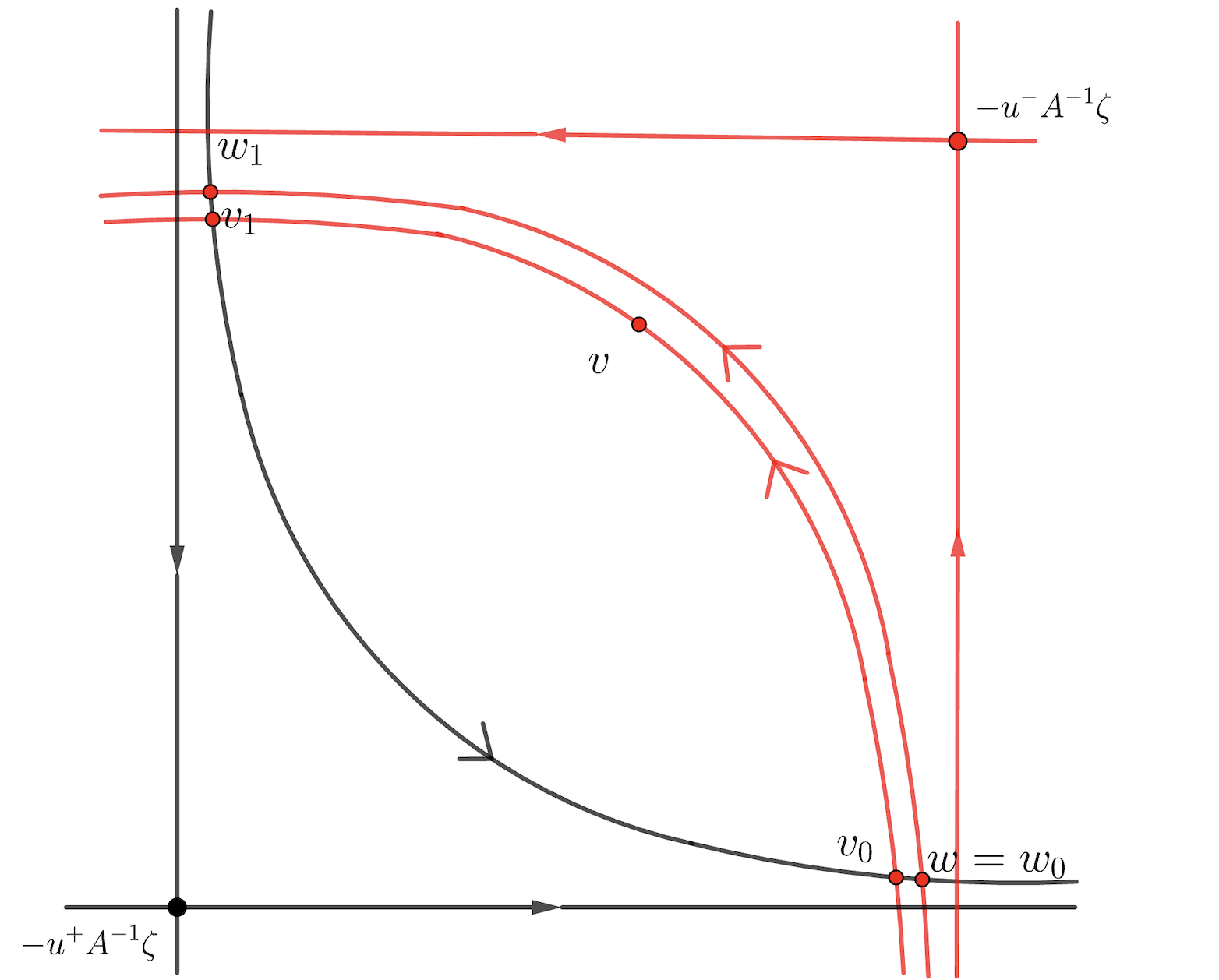}
		\caption{The case $G_{u^-}(w)< G_{u^-}(v)$ and $w=w_0$.}
		\label{figura4}
	\end{subfigure}
	\caption{Closed trajectory through $v$ and $w$.}
\end{figure}

\subsubsection{The case $\det A>0$.}

In this section, we will assume that the real eigenvalues of $A$ have the same sign. In this case, there exists a basis of $\R^2$ such that

\begin{equation}
\label{formsA}
A=\left(\begin{array}{cc}
\lambda&1\\
0 & \lambda
\end{array}\right)\hspace{.5cm}\mbox{ or }\hspace{.5cm}A=\left(\begin{array}{cc}
\lambda&0\\
0 & \mu
\end{array}\right)\hspace{.5cm}\lambda\mu>0.
\end{equation}

Since we are in the case $\det A>0$, let us assume, w.l.o.g., that the eigenvalues of $A$ are negatives, since the positive case is analogous.

Let $u_1, u_2\in\Omega$ with $u_1<u_2$ and define the function 
\begin{equation}
    \label{functionf}
f:(0,+\infty)^2\rightarrow \R^2, \hspace{1cm}
f(s,t)=\varphi(s,\varphi(t,v_1,u_2),u_1),
\end{equation}
where for simplicity we put $v_i=v(u_i)=-u_iA^{-1}\zeta$. Since $\det A\neq 0$, we have that 
$\varphi(t,v,u)=\rme^{tA}(v-v(u))+v(u)$, 
 and hence, $f$ can be rewritten as 
$$f(s,t)=(u_2-u_1)A^{-1}\rme^{sA}(\rme^{tA}-I)\zeta- u_1A^{-1}\zeta.$$

On the other hand, 
\begin{equation*}
\frac{\partial f}{\partial s}=(u_2-u_1)\rme^{sA}(\rme^{tA}-I)\zeta\ \ \ \ \ \textrm{e}\ \ \ \ \ \frac{\partial f}{\partial t}=(u_2-u_1)\rme^{(t+s)A}\zeta,
\end{equation*}
and, if for some $a\in \R$, it holds that 

$$\frac{\partial f}{\partial s}=a\frac{\partial f}{\partial t}\hspace{.5cm}\iff\hspace{.5cm}\rme^{(s+t)A}\zeta-\rme^{sA}\zeta = a\rme^{(t+s)A}\zeta\hspace{.5cm}\iff\hspace{.5cm}(1- a)\rme^{(t+s)A}\zeta =\rme^{sA}\zeta$$
$$\iff\hspace{.5cm}(1- a)\rme^{tA}\zeta= \zeta\hspace{.5cm}\stackrel{\zeta\neq 0}{\iff}\hspace{.5cm}\zeta\mbox{ is eigenvalue of } \rme^{tA}.$$
However, the possibilities for $A$ given in (\ref{formsA}) implies necessarily that 
 any eigenvalue of $\rme^{tA}$ is also an eigenvalue of $A$ and hence
 $$\left\{\frac{\partial f}{\partial s}, \frac{\partial f}{\partial t}\right\}\mbox{ is linearly independenty }\hspace{.5cm}\iff \hspace{.5cm}\langle A\zeta, \theta\zeta\rangle\neq 0.$$

The next lemma implies that the function $f$ defined previously is actually injective.

\begin{lemma}
\label{nocross}
Let $\tau_0>0$ and write $v=\varphi(\tau_0, v_1, u_2)$. If $r$ is the line through $v_1$ and $v$, then the curves  
\begin{equation*}
s\in(0,\tau_0)\longrightarrow \varphi(s, v_1, u_2)\hspace{.5cm}\mbox{ and }\hspace{.5cm} t\in(0,+\infty)\longrightarrow \varphi(t, v, u_1),
\end{equation*}
are on different half planes determined by $r$ (Figure \ref{bothsides} below).
\end{lemma}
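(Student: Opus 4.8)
The plan is to show that the two curves lie strictly on opposite sides of the line $r$ by using the sign structure of the vector field together with the fact, established just before the lemma, that $\{\partial f/\partial s, \partial f/\partial t\}$ is linearly independent wherever it is evaluated. The key geometric observation is that both arcs emanate from points on $r$ (the first from $v_1$, the second from $v = \varphi(\tau_0, v_1, u_2)$), so it suffices to check the direction in which each leaves $r$ and then argue that neither can cross back. First I would pick a linear functional $\ell$ on $\R^2$ with $\ker \ell = r - v_1$ (i.e. $\ell$ vanishes on the direction $v - v_1$), normalized so that $\ell$ is positive on some chosen side. The claim becomes: $\ell(\varphi(s, v_1, u_2)) > 0$ for $s \in (0, \tau_0)$ and $\ell(\varphi(t, v, u_1)) < 0$ for $t > 0$ (or the reverse, uniformly).

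Second, for the first arc I would examine $g(s) := \ell(\varphi(s, v_1, u_2))$. We have $g(0) = g(\tau_0) = 0$, and $g'(s) = \ell(A\varphi(s,v_1,u_2) + u_2\zeta) = \ell\big(A(\varphi(s,v_1,u_2) - v_1)\big) + \ell(Av_1 + u_2\zeta)$; since $v_1 = v(u_2)$ is the equilibrium for $u_2$, the last term vanishes, so $g'(s) = \ell(A(\varphi(s,v_1,u_2)-v_1))$. The point is that $\varphi(s,v_1,u_2) - v_1 = (e^{sA} - I)(v_1 - v_1)$... no — rather $\varphi(s,v_1,u_2) - v_1 = e^{sA}(v_1 - v_1)$ is zero, so I must instead write $\varphi(s,v_1,u_2) = e^{sA}(v_1 - v(u_2)) + v(u_2) = v(u_2) = v_1$, which is wrong because $v_1$ is the equilibrium. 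Let me re-set: the arc is $\varphi(t, v_1, u_2)$ where $v_1 = v(u_1)$, not $v(u_2)$ — reading \eqref{functionf}, $f(s,t) = \varphi(s, \varphi(t, v_1, u_2), u_1)$ with $v_i = v(u_i)$, so the inner arc starts at $v_1 = v(u_1)$ and flows under control $u_2$. Then $\varphi(t, v_1, u_2) - v_1$ is genuinely nonconstant, equal to $(e^{tA} - I)(v_1 - v_2)$ after simplification, and $g'(t) = \ell(A e^{tA}(v_1 - v_2))$. I would show $g$ does not vanish on $(0, \tau_0)$ by a convexity/monotonicity argument: since the eigenvalues of $A$ are negative, the solution arc $t \mapsto \varphi(t, v_1, u_2)$ is a monotone curve converging to $v_2$, and the segment from $v_1$ to $v$ is a chord of it — strict convexity of the arc (which follows from $\{\dot\varphi, \ddot\varphi\}$ being linearly independent, equivalently from the Wronskian-type nonvanishing already proved) forces the open arc to lie strictly on one side of the chord.

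Third, for the second arc $t \mapsto \varphi(t, v, u_1)$, starting at $v \in r$: its initial velocity is $\dot\varphi(0) = Av + u_1\zeta = A(v - v_1)$ (using $v_1 = v(u_1)$), and I would show $\ell(A(v-v_1))$ has the sign opposite to that taken by $g$ on $(0,\tau_0)$; this is exactly the content of the linear-independence computation preceding the lemma, which says $\partial f/\partial s$ and $\partial f/\partial t$ never become parallel, so the $u_1$-arc can never be tangent to the direction of the chord and in particular leaves $r$ transversally and into the other half-plane. Then I'd show it stays there: $\ell(\varphi(t,v,u_1))$ is again governed by a monotone/convex arc (negative eigenvalues, converging to $v_1$), so it cannot return to $r$. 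Concatenating, the two open arcs lie in the two different open half-planes determined by $r$.

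The main obstacle is making the "strictly on one side, never returns" claim rigorous without circularity: one must convert the infinitesimal non-parallelism ($\langle A\zeta, \theta\zeta\rangle \neq 0$, already in hand) into a global separation statement. I expect the cleanest route is to exploit that each arc is an affine image of $\{e^{tA}\zeta\}$ (up to translation), and that for $A$ with a negative double eigenvalue or two negative eigenvalues the curve $t \mapsto e^{tA}w$ is strictly convex (its signed curvature never vanishes, precisely because $\det[\,e^{tA}w \mid A e^{tA}w\,] = \det[w \mid Aw] \cdot \det e^{tA} \neq 0$); a strictly convex arc meets any line in at most two points, and its endpoints are the only points on the chord, which pins down everything. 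Handling the two Jordan forms in \eqref{formsA} uniformly, and dealing with the degenerate possibility that $v$, $v_1$, and the relevant equilibria are positioned so the chord is "vertical," will require a short case check but no new idea.
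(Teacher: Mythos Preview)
Your setup is the same as the paper's: both take the linear functional $\ell(p)=\langle p-v_1,\theta(v-v_1)\rangle$ and study $\rho_1(t)=\ell(\varphi(t,v_1,u_2))$ and $\rho_2(s)=\ell(\varphi(s,v,u_1))$. The paper, however, does not argue via global convexity. For $\rho_1$ it shows that $\rho_1'$ has \emph{at most one} zero on $(0,\tau_0)$ (the commutativity trick: $\rho_1'(t)=0$ forces $Ae^{tA}(e^{\tau_0A}-I)^{-1}=cI$, which from the normal forms in (\ref{formsA}) determines $t$ uniquely), so Rolle and $\rho_1(0)=\rho_1(\tau_0)=0$ give non-sign-change; the sign is then read off from $\rho_1''(t_0)$. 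For $\rho_2$ it observes directly that $\rho_2(s)=\langle e^{sA}(v-v_1),\theta(v-v_1)\rangle=0$ would make $v-v_1$ an eigenvector of $e^{sA}$, hence of $A$, which is excluded by (\ref{autovetordeA}) and the LARC; the sign is read off from $\rho_2(s_0)$ at the unique critical point. Your ``nonvanishing curvature $\Rightarrow$ meets each line at most twice'' is essentially a repackaging of the $\rho_1$ argument, and is valid once you (i) use the correct quantity $\det[\dot\gamma\,|\,\ddot\gamma]$ rather than $\det[\gamma\,|\,\dot\gamma]$ (they differ by $\det A\neq0$, so the conclusion survives), and (ii) note that for the forms in (\ref{formsA}) one coordinate of $e^{tA}w$ is monotone, so the curve is a graph---pure nonvanishing curvature does not by itself bound the number of intersections with a line. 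For $\rho_2$ your convexity route is awkward because one ``intersection'' with $r$ is only a limit; the paper's eigenvector argument is cleaner there.

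The genuine gap is the \emph{opposite-sides} step. You write that the $u_1$-arc ``leaves $r$ transversally and into the other half-plane'' and that ``this is exactly the content of the linear-independence computation preceding the lemma''. It is not: linear independence of $\partial f/\partial s$ and $\partial f/\partial t$ at $(0,\tau_0)$ tells you the two tangent vectors at $v$ are non-parallel, but two non-parallel vectors can perfectly well have $\ell$-components of the same sign. A real sign comparison is needed. The paper does it by explicit computation: $\rho_1$ carries the sign of $-\langle A\zeta,\theta\zeta\rangle$ (from $\rho_1''(t_0)$) while $\rho_2$ carries the sign of $+\langle A\zeta,\theta\zeta\rangle$ (from $\rho_2(s_0)$). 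If you want to stay in your framework, you can argue as follows: the first arc leaves $v_1$ with tangent $(u_2-u_1)\zeta$ and arrives at $v$ with tangent $T_1$; strict convexity together with $\rho_1>0$ on $(0,\tau_0)$ forces $\rho_1'(0)=(u_2-u_1)\ell(\zeta)>0$ and $\rho_1'(\tau_0)=\ell(T_1)<0$ strictly; since the second arc's initial tangent is $T_2=T_1-(u_2-u_1)\zeta$, you get $\ell(T_2)=\ell(T_1)-(u_2-u_1)\ell(\zeta)<0$. But the strict inequalities here require $\ell(\zeta)\neq0$, i.e.\ $\zeta\not\parallel(v-v_1)$, which is yet another eigenvector check (of the same flavor as the paper's) that you have not carried out.
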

\begin{figure}[H]
\centering
\includegraphics[scale=0.6]{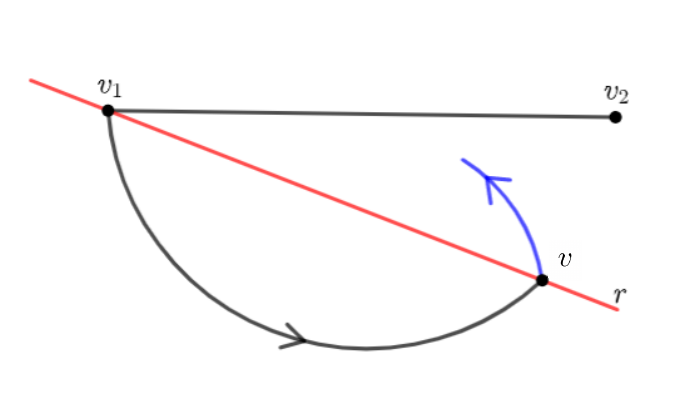}
\caption{Curves on different half spaces.}\label{bothsides}
\end{figure}

\begin{proof} In order to show the result, it is enough to show that the functions
\begin{equation*}
\rho_1(t)=\left\langle \varphi(t, v_1, u_2)-v_1, \theta(v-v_1) \right\rangle \hspace{.5cm} \mbox{ and }\hspace{.5cm} \rho_2(s)=\left\langle \varphi(s, v,u_1)-v_1,\theta(v-v_1) \right\rangle
\end{equation*}
have opposite signs. Since, 
\begin{equation}\label{autovetordeA}
\begin{array}{rcl}
v-v_1&=&\varphi(\tau_0, v_1,u_2)-v_1\\
&=&\left(\rme^{\tau_0A}-I\right)(v_1-v_2)\\
&=&(u_2-u_1)A^{-1}\left(\rme^{\tau_0A}-I\right)\zeta,
\end{array}
\end{equation}
the vector $v-v_1$ is an eigenvector of $A$ if and only if $\langle A\zeta,\theta\zeta\rangle=0$. Therefore, the function $\rho_1$ is trivially zero. On the other hand, a simple derivation gives us that
$$
\rho_1'(t)=\left\langle A\rme^{tA}(v_1-v_2),\theta(v-v_1) \right\rangle= (u_2-u_1)^2\left\langle \rme^{tA}\zeta,\theta A^{-1}\left(\rme^{\tau_0A}-I\right)\zeta \right\rangle$$
$$=\frac{(u_2-u_1)^2}{\det A}\det\left(\rme^{\tau_0A}-I\right) \left\langle A\rme^{tA}\left(\rme^{\tau_0A}-I\right)^{-1}\zeta,\theta \zeta\right\rangle,
$$
and hence, $\rho_1'(t)=0$ if and only if $\zeta$ is an eigenvector of $B:=A\rme^{t_0A}\left(\rme^{\tau_0A}-I\right)^{-1}$. Since $A$ and $B$ commute and  $\langle A\zeta,\theta\zeta\rangle\neq0$, we have necessarily that $B=cI$. Using the possible forms of $A$ given in (\ref{formsA}), we conclude that 
\begin{equation*}
c=\frac{\lambda\rme^{t\lambda}}{\rme^{\tau_0\lambda}-1}>0, \hspace{.5cm}\mbox{ by the assumption }\hspace{.5cm}\lambda<0,
\end{equation*}
and $\rho'_1(t)=0$ for, at most, one $t\in (0, \tau_0)$. By Rolle's Theorem and the fact that $\rho_1(0)=\rho_1(\tau_0)=0$, there exists a unique $t_0\in(0,\tau_0)$ such that $\rho_1'(t_0)=0$. Furthermore, $\rho_1$ cannot change signs on $(0, \tau_0)$. 




A second derivation gives us  

\begin{equation*}
\rho_1''(t)=\frac{(u_2-u_1)^2}{\det A}\det(e^{\tau_0A}-1)\left\langle A^2\rme^{tA}\left(\rme^{\tau_0A}-I\right)^{-1}\zeta,\theta \zeta \right\rangle,
\end{equation*}
implying that
\begin{equation*}
\rho_1''(t_0)=c\frac{(u_2-u_1)^2}{\det A}\det(e^{\tau_0A}-1)\left\langle A\zeta,\theta \zeta \right\rangle,
\end{equation*}
implying that the sign of $\rho_1$ is the same as the value of $-\langle A\zeta,\theta \zeta \rangle$.

On the other hand, let us notice that
$$0=\rho_2(s)=\left\langle  \varphi(s, v ,u_1)-v_1 ,\theta(
v -v_1)\right\rangle=\left\langle  \rme^{sA}(v-v_1),\theta(v-v_1)\right\rangle,$$
if and only if $v-v_1$ is an eigenvector of $\rme^{sA}$ if and only if $v-v_1$ is an eigenvector of $A$. Since the previous does not happen, we conclude that $\rho_2\neq 0$ for all $t>0$. However, the fact that
$$\lim_{s\rightarrow 0^+}\rho_2(s)=\lim_{s\rightarrow \infty^+}\rho_2(s)=0,$$
implies that $\rho_2'(s)=0$ for some $s>0$. By a simple derivation, we get 
$$\rho_2'(s_0)=\left\langle  A\rme^{s_0A}(v-v_1),\theta(v-v_1)\right\rangle=0,$$
which is equivalent to if $v-v_1$ be an eigenvector of $B':=A\rme^{s_0A}$. As previously stated, the commutativity of $A$ and $B'$ together with the fact that $v-v_1$ is not an eigenvector of $A$, forces the existence of $c'\in\R$ such that $B'=c'I$. Using (\ref{formsA}) we conclude that 
$$c'=\lambda\rme^{s_0\lambda}<0, \hspace{.5cm}\mbox{ by the assumption }\hspace{.5cm}\lambda<0,$$
and that $s_0$ is unique. Consequently, the sign of $\rho_2$ is the same as that of $\rho_2( s_0)$ and 
$$ \rho_2(s_0)=\left\langle  \rme^{s_0A}(v-v_1),\theta(v-v_1)\right\rangle=
c'\left\langle  A^{-1}(v-v_1),\theta(v-v_1)\right\rangle = c'\frac{(u_2-u_1)^2}{\det(A)}\det(\rme^{\tau_0A}-1)\left\langle  A^{-1}\zeta,\theta\zeta \right\rangle$$
$$=c'\frac{(u_2-u_1)^2}{\det A}\det(\rme^{\tau_0A}-1)\left\langle  \zeta,\theta A\zeta \right\rangle=-c'\frac{(u_2-u_1)^2}{\det A}\det(\rme^{\tau_0A}-1)\left\langle  A\zeta,\theta\zeta \right\rangle.
$$
showing that the sign of $\rho_2$ is equal to the sign of $\left\langle  A\zeta,\theta\zeta \right\rangle$ and concluding the proof.
\end{proof}

\bigskip

The previous lemma implies that the function $f$ defined in (\ref{functionf}) is injective. In fact, 
\begin{equation*}
f(s_1,t_1)=f(s_2,t_2)\hspace{.5cm}\iff\hspace{.5cm}\rme^{s_1A}\left(\rme^{t_1A}-I \right)\zeta=\rme^{s_2A}\left(\rme^{t_2A}-I \right)\zeta,
\end{equation*}
which shows that $\zeta$ is an eigenvector of $B'':=\left(\rme^{s_2A}\left(\rme^{t_2A}-I \right)\right)^{-1}\rme^{s_1A}\left(\rme^{t_1A}-I \right)$. Again, the fact that $A$ and $B''$ commute, together with $\left\langle  A\zeta,\theta\zeta \right\rangle\neq 0$, implies that $B''=I$ and hence 
\begin{equation*}
\rme^{s_1A}\left(\rme^{t_1A}-I \right)=\rme^{s_2A}\left(\rme^{t_2A}-I \right).
\end{equation*}
Since
$$s_1<s_2\hspace{.5cm}\iff\hspace{.5cm}t_1<t_2\hspace{.5cm}\mbox{ and }\hspace{.5cm} s_1=s_2\hspace{.5cm}\iff\hspace{.5cm}t_1=t_2,$$ 
we can assume, w.l.o.g., that $s_1<s_2$ and $t_1<t_2$. Denoting by $v'=\varphi(t_1, v_1, u_2)$ and $v=\varphi(t_2, v_1, u_2)$, we get that 
\begin{equation*}
\varphi(s_1,v',u_1)=f(s_1,t_1)=f(s_2,t_2)=
\varphi(s_2,v,u_1), 
\end{equation*}
or equivalently, $v'=\varphi(s_2-s_1,v,u_1)$. However, such equality contradicts Lemma \ref{nocross}
, since $v'$ and $\varphi(s_2-s_1, v,u_1)$ have to be on different half planes of the line through $v_1$ and $v$. Therefore, 
$$(s_1,t_1)\neq(s_2,t_2)\hspace{.5cm}\implies\hspace{.5cm}f(s_1,t_1)\neq f(s_2,t_2),$$
showing that $f$ is injective. A geometrical description of the function $f$ is given by Figure \ref{graphf} below.

\begin{figure}[H]
\centering
\includegraphics[scale=0.5]{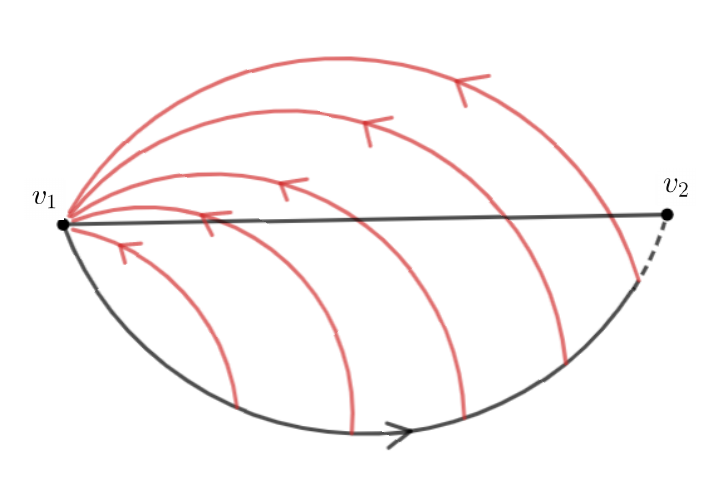}
\caption{Geometrical description of $f$.}\label{graphf}
\end{figure}

Since we are assuming that $A$ has only negative eigenvalues, by \cite[Proposition 2.2.7]{FCWK1} there exists a norm $|\cdot|_A$ of $\R^2$ and a positive number $\delta>0$ such that 
\begin{equation}
\label{desigualdade}
\left|\rme^{tA}v\right|_A\leq \rme^{-t\delta }|v|_A, \hspace{.5cm}\forall t>0, v\in\R^2.
\end{equation}
This fact will help us to prove the uniqueness of the control set of $\Sigma_{\R^2}$, as stated in the next result.

\begin{theorem}
\label{theo}
    If the LCS $\Sigma_{\R^2}$ satisfies the LARC and $\det A>0$, it admits a unique control set with a nonempty interior $\CC_{\R^2}$ satisfying
     $$\inner\CC_{\R^2}=\mathrm{Im}(f),$$
     where $f$ is the diffeomorphism
    $$ f:(0,+\infty)^2\rightarrow \R^2, \hspace{1cm}
f(s,t)=\varphi(\epsilon s,\varphi(\epsilon t, v(u^-), u^+), u^-),\hspace{.5cm}\mbox{ with }\hspace{.5cm}\epsilon=\left\{\begin{array}{cc}
    1 & \mbox{ if }\tr A<0 \\
    -1 & \mbox{ if }\tr A>0
\end{array}\right..$$

Moreover, $\CC_{\R^2}$ is closed if $\tr A<0$ and open if $\tr A>0$. 
\end{theorem}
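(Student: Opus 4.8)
The plan is to prove everything for $\tr A<0$ (so $\epsilon=1$, both eigenvalues negative, $u_1=u^-$, $u_2=u^+$) and to deduce the case $\tr A>0$ by reversing time. Indeed, $\dot w=-Aw-u\eta$ has associated matrix $-A$ with negative eigenvalues and $\det(-A)=\det A>0$, its forward orbits are the backward orbits of $\Sigma_{\R^2}$, and its equilibria coincide with those of $\Sigma_{\R^2}$; hence the map $f$ with $\epsilon=-1$ is exactly the corresponding map of the reversed system, and once the closed case is settled, the control set of $\Sigma_{\R^2}$ is the interior of the (closed) control set of the reversed system, hence open. So assume $\tr A<0$. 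First I would record the geometry of $\mathrm{Im}(f)$: since $\partial f/\partial s$ and $\partial f/\partial t$ are linearly independent (shown above) and $f$ is injective (consequence of Lemma \ref{nocross}, established above), $f$ is a diffeomorphism onto the open set $\mathrm{Im}(f)$ by invariance of domain; moreover $\overline{\mathrm{Im}(f)}$ is the compact region bounded by the Jordan curve $\gamma^+\cup\gamma^-$, where $\gamma^+=\{\varphi(t,v(u^-),u^+):t\in[0,+\infty]\}$ runs from $v(u^-)$ to $v(u^+)$ (the limit at $t=+\infty$ exists since $\rme^{tA}\to 0$) and $\gamma^-=\{\varphi(s,v(u^+),u^-):s\in[0,+\infty]\}$ runs back from $v(u^+)$ to $v(u^-)$, both arcs being injective (by the LARC) and, again by Lemma \ref{nocross}, meeting only at the two corners $v(u^\pm)$; in particular $\inner\overline{\mathrm{Im}(f)}=\mathrm{Im}(f)$.

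Next I would prove approximate controllability. Since $A$ has negative eigenvalues, $\varphi(s,w,u^-)\to v(u^-)$ as $s\to+\infty$ for every $w\in\R^2$. Hence, given $q=f(s_0,t_0)\in\mathrm{Im}(f)$, one steers $w$ close to $v(u^-)$ with the control $u^-$, then applies $u^+$ for time $t_0$ followed by $u^-$ for time $s_0$; by continuous dependence on initial conditions the endpoint is arbitrarily close to $\varphi(s_0,\varphi(t_0,v(u^-),u^+),u^-)=q$. Thus $\mathrm{Im}(f)\subseteq\overline{\OC^+(w)}$ for \emph{every} $w\in\R^2$. Also, the constant control $u^-$ keeps $\overline{\mathrm{Im}(f)}$ inside itself (e.g. $\varphi(t,f(s_0,t_0),u^-)=f(s_0+t,t_0)$, and similarly on the boundary arcs and corners), so condition 1 of Definition \ref{control} holds at every point of $\overline{\mathrm{Im}(f)}$.

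The decisive point, which I expect to be the main obstacle, is the forward invariance $\varphi(t,\overline{\mathrm{Im}(f)},u)\subseteq\overline{\mathrm{Im}(f)}$ for all $u\in\Omega$, $t\geq 0$. Since each $\varphi(t,\cdot,u)$ is a homeomorphism of the plane and $\overline{\mathrm{Im}(f)}$ is a Jordan region, a sub-tangentiality (Nagumo-type) argument reduces this to checking that at every boundary point the admissible velocities $\{Av+u\zeta:u\in\Omega\}$ lie in the inner tangent cone of $\overline{\mathrm{Im}(f)}$. At an interior point $p$ of $\gamma^+$ the tangent direction is $Ap+u^+\zeta$ (the velocity of $\gamma^+$), while $Ap+u^-\zeta$ points strictly into $\mathrm{Im}(f)$ because $\varphi(s,p,u^-)=f(s,t)\in\mathrm{Im}(f)$ for small $s>0$; these two vectors are not parallel, for otherwise $p$ would lie on the equilibria segment $\{v(u):u\in\R\}$, which the LARC forbids at interior points of $\gamma^+$; hence the segment $\{Ap+u\zeta:u\in[u^-,u^+]\}$ lies in the closed inner half-plane. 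The analysis on $\gamma^-$ is symmetric (tangent $Ap+u^-\zeta$, inward $Ap+u^+\zeta$), and at the corners $v(u^\pm)$ the admissible velocities are the nonnegative multiples of $\pm\zeta$, which are tangent to the boundary arc issuing from that corner. Invariance follows; in particular $\OC^+(v)\subseteq\overline{\mathrm{Im}(f)}$ for every $v\in\overline{\mathrm{Im}(f)}$.

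Finally I would assemble the pieces. For $v\in\overline{\mathrm{Im}(f)}$ the last two steps give $\mathrm{Im}(f)\subseteq\overline{\OC^+(v)}\subseteq\overline{\mathrm{Im}(f)}$, hence $\overline{\OC^+(v)}=\overline{\mathrm{Im}(f)}$, so $\overline{\mathrm{Im}(f)}$ satisfies conditions 1--2 of Definition \ref{control} and is maximal with condition 2: any set with that property meeting $\mathrm{Im}(f)$ at a point $q$ is contained in $\overline{\OC^+(q)}=\overline{\mathrm{Im}(f)}$. Moreover inequality (\ref{desigualdade}) forces every forward orbit to accumulate on the segment of equilibria $\{v(u):u\in\Omega\}\subseteq\overline{\mathrm{Im}(f)}$, so no control set can be disjoint from $\mathrm{Im}(f)$; by Proposition \ref{int} this shows $\CC_{\R^2}:=\overline{\mathrm{Im}(f)}$ is the unique control set with nonempty interior, it is closed, and $\inner\CC_{\R^2}=\mathrm{Im}(f)$. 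The openness for $\tr A>0$ is then immediate from the time-reversal reduction described at the outset.
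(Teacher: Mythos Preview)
Your proof is correct and follows essentially the same architecture as the paper's: reduce to $\tr A<0$ by time reversal, use the diffeomorphism $f$ and Lemma \ref{nocross} to identify $\overline{\mathrm{Im}(f)}$ as a Jordan region bounded by the two extremal arcs, obtain $\overline{\mathrm{Im}(f)}\subset\overline{\OC^+(v)}$ via convergence to $v(u^-)$, establish positive invariance by a tangency check on $\partial\mathrm{Im}(f)$, and conclude uniqueness from the contraction estimate (\ref{desigualdade}). The paper carries out the invariance step by computing explicitly $\langle Av+u\zeta,\theta(Av+u^*\zeta)\rangle=|u^*-u|(u^+-u^-)\langle\rme^{tA}\zeta,\theta\zeta\rangle$, which is exactly your Nagumo condition written out; the non-parallelism you invoke is the non-vanishing of $\langle\rme^{tA}\zeta,\theta\zeta\rangle$ for $t>0$.

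One small correction in your uniqueness paragraph: inequality (\ref{desigualdade}) does \emph{not} force every forward orbit to accumulate on the equilibria segment $\{v(u):u\in\Omega\}$ (a bang-bang trajectory can converge to a periodic orbit elsewhere in $\overline{\mathrm{Im}(f)}$). What (\ref{desigualdade}) does give---and what the paper actually uses---is $|\varphi(t,v,{\bf u})-\CC_{\R^2}|_A\leq\rme^{-\delta t}|v-\CC_{\R^2}|_A$, so any control set must meet $\CC_{\R^2}$ and hence, by maximality, coincide with it. With this adjustment your argument is identical to the paper's.
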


\begin{proof}
     Let us assume, as previously stated, that $A$ has only negative eigenvalues, or equivalently $\tr A<0$. The fact that $f$ is a diffeomorphism was stated previously. By the definition of $f$, it holds that $\mathrm{Im}(f)\subset\OC^+(v(u^-))$. On the other hand, 
    $$\varphi(t, v, u^-)\rightarrow v(u^-)\hspace{.5cm}\implies\hspace{.5cm}\overline{\OC^+(v(u^-))}\subset \overline{\OC^+(v)},$$
    and hence
    $$\overline{\mathrm{Im}(f)}\subset\overline{\OC^+(v(u^-))}\subset \overline{\OC^+(v)}, \hspace{.5cm}\forall v\in \overline{\mathrm{Im}(f)}.$$
    Therefore, $\overline{\mathrm{Im}(f)}$ is a subset with a nonempty interior satisfying condition 2. in Definition \ref{control}. In order to show that it is in fact a control set, let us show that $\mathrm{Im}(f)$ is positively invariant. Let us note that the property
    \begin{equation}
        \label{solution3}
        \varphi(t, v, u)\rightarrow v(u), \hspace{.5cm}t\rightarrow+\infty,
    \end{equation}
    implies that if a trajectory of $\Sigma_{\R^2}$ leaves $\mathrm{Im}(f)$, it has to come back inside $\mathrm{Im}(f)$, crossing the boundary $\partial(\mathrm{Im}(f))$ (at most) once. Therefore, the angle at intersection points is zero if the trajectory crosses $\partial(\mathrm{Im}(f))$ once, and it has different signs if the trajectory crosses $\partial(\mathrm{Im}(f))$ twice. 
    
    Since the boundary $\partial(\mathrm{Im}(f))$ is parametrized by the curves, 
    $$t\in(0, +\infty)\mapsto\varphi(t, v(u^-), u^+)\hspace{.5cm}\mbox{ and }\hspace{.5cm}t\in(0, +\infty)\mapsto\varphi(t, v(u^+), u^-),$$
    in order to show that no trajectory leaves $\mathrm{Im}(f)$ in positive time,  
    it is enough to show that 
    $$\left\langle Av+u\zeta, \theta\left(Av+u^*\zeta\right)\right\rangle,\hspace{.5cm}\forall u\in\Omega, \hspace{.5cm} u\neq u^*,$$
    where $v\in \varphi((0, +\infty), v(u^-), u^+)$ if $u^*=u^+$ and $v\in \varphi((0, +\infty), v(u^+), u^-)$ if $u^*=u^-$, have all the same sign (see Figure \ref{sign}).

    However, the fact that.
    $$A\varphi(t, v(u^+), u^-)=A\left(\rme^{tA}(v(u^+)-v(u^-))+v(u^-)\right)=(u^--u^+)\rme^{tA}\zeta-u^-\zeta$$
 and 
 $$A\varphi(t, v(u^-), u^{+})=A\left(\rme^{tA}(v(u^-)-v(u^+))+v(u^+)\right)=(u^+-u^-)\rme^{tA}\zeta-u^+\zeta,$$
implies necessarily
    $$\left\langle Av+u\zeta, \theta\left(Av+u^*\zeta\right)\right\rangle=|u^*-u|(u^+-u^-)\langle \rme^{tA}, \theta\zeta\rangle,$$
    and since, by the LARC, $\langle \rme^{tA}\zeta, \theta\zeta\rangle\neq 0$ for all $t\neq 0$, we conclude that $\mathrm{Im}(f)$ is positively invariant, implying that $\CC_{\R^2}=\overline{\mathrm{Im}(f)}$ a control set of $\Sigma_{\R^2}$, with $\inner\CC_{\R^2}=\mathrm{Im}(f)$.

For the uniqueness, let us notice that, since $\CC_{\R^2}$ is compact, for any $v\in\R^2\setminus\CC_{\R^2}$ there exists $v_0\in\CC_{\R^2}$ such that 
$$\left|v-\CC_{\R^2}\right|_A=\inf\left\{\left|v-w\right|_A, w\in\CC_{\R^2}\right\}=\left|v-v_0\right|_A.$$
Hence, the fact that $\OC^+(v_0)\subset\CC_{\R^2}$ implies
$$\left|\varphi(t, v, u)-\CC_{\R^2}\right|_A\leq \left|\varphi(t, v, u)-\varphi(t, v_0, u)\right|_A= \left|\rme^{tA}(v-v_0)\right|_A\leq \rme^{\delta t}|v-v_0|_A=\rme^{-\delta t}|v-\CC_{\R^2}|_A,$$
where for the last inequality we used relation (\ref{desigualdade}). Now, if $\CC'$ is a second control set of $\Sigma_{\R^2}$ and we consider $v'\in \CC'$, condition 1. in Definition \ref{control}, implies the existence of ${\bf u}\in\UC$ such that $\varphi(t, v', {\bf u})\in \CC'$ for all $t>0$. Hence, 
$$\inf\{|v-w|_A, v\in\CC_{\R^2}, w\in\CC'\}\leq |\varphi(t, v', {\bf u})-\CC_{\R^2}|_A\leq\rme^{-\delta t}|v'-\CC_{\R^2}|_A\rightarrow 0,$$
which by the compactness of $\CC_{\R^2}$ implies that $\CC'\subset\partial \CC_{\R^2}$ and hence $\CC'=\CC_{\R^2}$, showing the uniqueness of $\CC_{\R^2}$ and concluding the proof. 
    \end{proof}

\begin{figure}[H]
\centering
\includegraphics[scale=0.5]{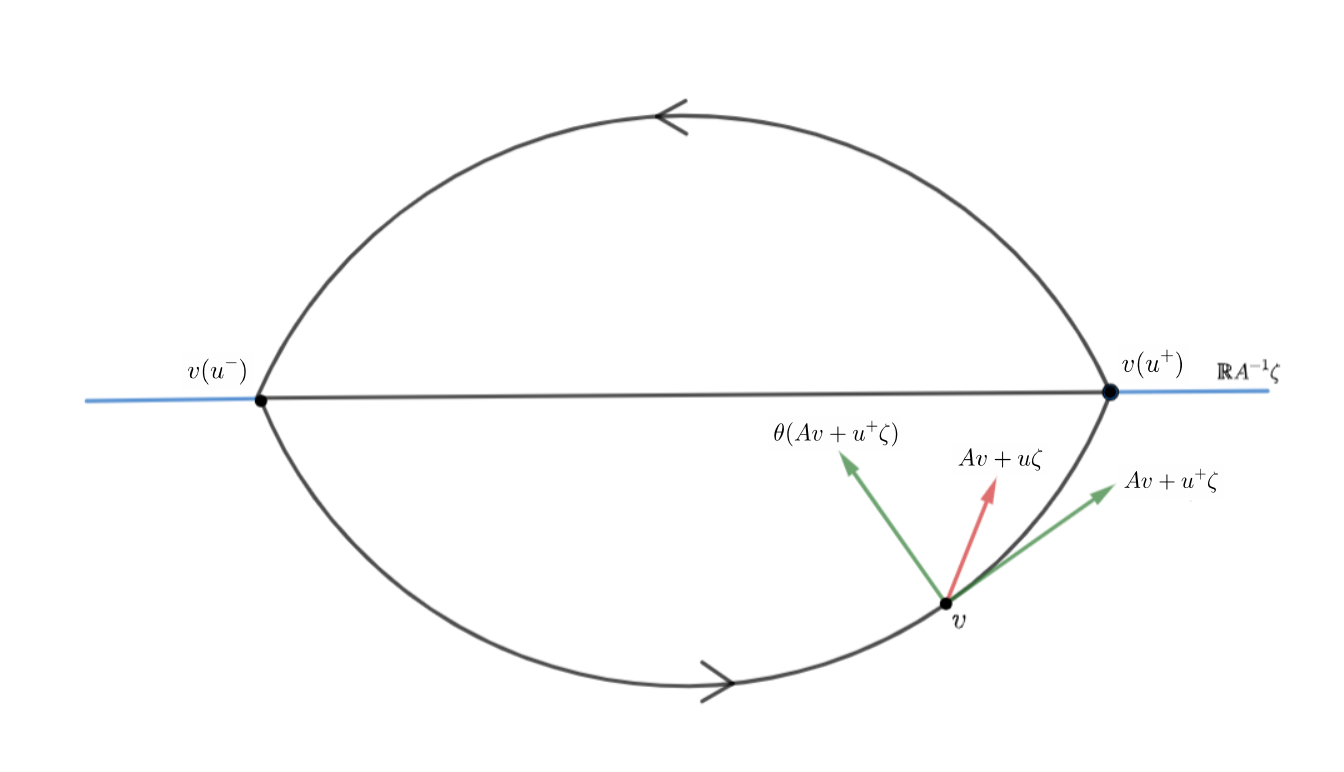}
\caption{Positively invariance of $\mathrm{Im}(f)$.}\label{sign}
\end{figure}

We finish this section with an interesting result concerning the open control set.

\begin{corollary}
    If the LCS $\Sigma_{\R^2}$ satisfies the LARC, $\det A>0$ and $\tr A>0$, the open control set $\CC_{\R^2}$ admits two one-point control sets $\{v(u^+)\}$ and $\{v(u^-)\}$ at its boundary $\partial\CC_{\R^2}$.
\end{corollary}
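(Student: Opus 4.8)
The plan is to exploit the fact that when $\tr A>0$ the control set $\CC_{\R^2}$ is the open set $\mathrm{Im}(f)$, whose boundary is parametrized by the two orbit arcs $t\mapsto\varphi(\epsilon t,v(u^-),u^+)$ and $t\mapsto\varphi(\epsilon t,v(u^+),u^-)$ with $\epsilon=-1$; since $\epsilon=-1$, these are in fact \emph{negative}-time orbits of the constant-control vector fields. First I would observe that the two equilibria $v(u^+)=-u^+A^{-1}\zeta$ and $v(u^-)=-u^-A^{-1}\zeta$ are precisely the two ``corner'' limit points of $\partial\CC_{\R^2}$: under the constant control $u^+$ one has $\varphi(t,v,u^+)\to v(u^+)$ as $t\to+\infty$ (because $A$ now has positive eigenvalues, so $\rme^{-tA}\to 0$ is false — rather, reversing time, the flow of $\dot v=Av+u^+\zeta$ contracts to $v(u^+)$ in \emph{backward} time, matching the $\epsilon=-1$ reparametrization). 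Thus $v(u^\pm)\in\partial\CC_{\R^2}=\overline{\CC_{\R^2}}\setminus\CC_{\R^2}$, which is the first thing to nail down carefully.

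Next I would check that each singleton $\{v(u^\pm)\}$ satisfies conditions 1 and 2 of Definition \ref{control}. Condition 1 is immediate: $v(u^+)$ is an equilibrium for the constant control $u^+\in\Omega$, so $\varphi(\R^+,v(u^+),u^+)=\{v(u^+)\}$. For condition 2 I must show $v(u^+)\in\overline{\OC^+(v(u^+))}$, which is trivial since $v(u^+)\in\OC^+(v(u^+))$. Hence by Proposition \ref{int} (or directly, since a singleton is certainly maximal among sets contained in the forward orbit closure of each of their points once we show no larger control set contains it) there is a control set $\CC^\pm$ with $v(u^\pm)\in\CC^\pm$. The substantive point is that $\CC^\pm=\{v(u^\pm)\}$, i.e.\ the control set through $v(u^\pm)$ is \emph{exactly} this single point and is disjoint from the big control set $\CC_{\R^2}$.

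To prove $\CC^+=\{v(u^+)\}$ I would argue as follows. Suppose $w\in\CC^+$ with $w\neq v(u^+)$. By condition 2 applied to the point $v(u^+)$, we need $\CC^+\subset\overline{\OC^+(v(u^+))}$; but I claim $\OC^+(v(u^+))=\{v(u^+)\}$, because under \emph{any} control the forward flow starting at an equilibrium $v(u^+)$ of one of the constituent vector fields moves away and cannot return: concretely, using the contraction estimate in backward time together with the injectivity/no-crossing geometry of Lemma \ref{nocross}, once the trajectory leaves $v(u^+)$ it enters $\mathrm{Im}(f)=\inner\CC_{\R^2}$ and, by the positive invariance of $\mathrm{Im}(f)$ established in the proof of Theorem \ref{theo}, it can never come back to the boundary point $v(u^+)$. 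Hence $\overline{\OC^+(v(u^+))}$ meets $\CC^+$ only in $v(u^+)$ itself — more carefully, any recurrence to $v(u^+)$ would force a closed trajectory through $v(u^+)$, contradicting that $v(u^+)\in\partial\CC_{\R^2}$ while trajectories through interior-adjacent points are trapped in $\inner\CC_{\R^2}$. Therefore $\CC^+=\{v(u^+)\}$, and symmetrically $\CC^-=\{v(u^-)\}$; both lie on $\partial\CC_{\R^2}$ by the first step, and they are distinct since $u^-<u^+$ and $A^{-1}\zeta\neq 0$.

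The main obstacle I anticipate is the rigorous verification that $\OC^+(v(u^\pm))=\{v(u^\pm)\}$, equivalently that no concatenation of constant-control trajectories starting at an equilibrium of the $u^+$ (resp.\ $u^-$) vector field ever returns to that equilibrium. This is where the geometry of Lemma \ref{nocross} and the positive invariance of $\mathrm{Im}(f)$ from Theorem \ref{theo} must be combined: an escaping trajectory immediately sits in $\inner\CC_{\R^2}=\mathrm{Im}(f)$, which is forward-invariant and whose closure's boundary the flow only touches transversally (the LARC angle condition $\langle\rme^{tA}\zeta,\theta\zeta\rangle\neq0$), so a boundary equilibrium can never be revisited in positive time. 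Once that is in hand, the rest is bookkeeping.
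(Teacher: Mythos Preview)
Your argument rests on two claims that are both false in the case $\tr A>0$. First, you write ``$\OC^+(v(u^+))=\{v(u^+)\}$''; this cannot hold, since applying any control $u\neq u^+$ at the equilibrium $v(u^+)$ gives $Av(u^+)+u\zeta=(u-u^+)\zeta\neq 0$, so the forward orbit leaves the point immediately and $\OC^+(v(u^+))$ is much larger than the singleton. Second, and more seriously, you invoke the ``positive invariance of $\mathrm{Im}(f)$ established in the proof of Theorem \ref{theo}'' to trap an escaping trajectory inside $\inner\CC_{\R^2}$. That invariance was proved under the hypothesis $\tr A<0$. When $\tr A>0$ one reverses time, so $\mathrm{Im}(f)=\CC_{\R^2}$ is \emph{negatively} invariant, which is equivalent to saying that its complement $\R^2\setminus\CC_{\R^2}$ is positively invariant. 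Hence a forward trajectory starting at the boundary point $v(u^+)\in\partial\CC_{\R^2}$ never enters $\inner\CC_{\R^2}$; it stays in the closed complement and, because the eigenvalues are positive, generically moves away from $\overline{\CC_{\R^2}}$. Your trapping mechanism therefore points in the wrong direction and the argument collapses.

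The paper proceeds differently. From the negative invariance of $\CC_{\R^2}$ and the norm estimate one gets $\left|\varphi(t,v,u)-\CC_{\R^2}\right|_A\geq \rme^{\delta t}\left|v-\CC_{\R^2}\right|_A$ for $v\notin\overline{\CC_{\R^2}}$, so any additional control set must lie in $\partial\CC_{\R^2}$ and any approximating trajectory $\varphi(t_k,v_1,{\bf u}_k)\to v_2$ between two of its points must remain in $\partial\CC_{\R^2}$. Since that boundary is exactly the union of the two backward-time orbits $t\mapsto\varphi(-t,v(u^\mp),u^\pm)$, staying on it forces ${\bf u}_k\equiv u^+$ or ${\bf u}_k\equiv u^-$, and the only boundary points compatible with forward-time recurrence under such constant controls are the equilibria $v(u^+)$ and $v(u^-)$. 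To salvage your approach you should replace the incorrect positive-invariance claim by this complement/boundary argument; the maximality of $\{v(u^\pm)\}$ then follows not from $\OC^+(v(u^\pm))$ being a singleton (it is not) but from the fact that no other boundary point can approximately return to $v(u^\pm)$ in positive time.
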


\begin{proof}
By reversing the time in the proof of Theorem \ref{theo} if $\tr A>0$, it holds that $\CC_{\R^2}=\mathrm{Im}(f)$ is an open control set of $\Sigma_{\R^2}$ and any other possible control set has to be contained in $\partial\CC_{\R^2}$. Moreover, $\CC_{\R^2}$ is negatively invariant, implying that
$$\left|\varphi(t, v, u)-\CC_{\R^2}\right|_A\geq\rme^{\delta t}|v-\CC_{\R^2}|_A, \hspace{.5cm}\forall t>0, v\in\R^2\setminus\overline{\CC_{\R^2}}.$$

Therefore, if $v_1, v_2\in\partial\CC_{\R^2}$ belongs to a control set $\CC'\subset \partial\CC_{\R^2}$ and 
$$\varphi(t_k, v_1, {\bf u}_k)\rightarrow v_2, \hspace{.5cm}k\rightarrow+\infty,\hspace{.5cm}\mbox{ then }\hspace{.5cm}\varphi(t_k, v_1, {\bf u}_k)\in\partial\CC_{\R^2}, \hspace{.5cm}\forall k\in\N.$$

Since, $\partial\CC_{\R^2}$ is parametrized by the curves, 
    $$t\in(0, +\infty)\mapsto\varphi(-t, v(u^-), u^+)\hspace{.5cm}\mbox{ and }\hspace{.5cm}t\in(0, +\infty)\mapsto\varphi(-t, v(u^+), u^-),$$
we must have that ${\bf u}_k\equiv u^+$ or ${\bf u}_k\equiv u^-$, which implies $\{v(u^+)\}$ and $\{v(u^-)\}$ are one-point control sets, concluding the proof.
\end{proof}

\begin{remark}
It is worth to emphasize the differences between the real case, treated here, and the complex case, treated in \cite{VAAS}. Firstly, the fact that $\Omega$ is or is not an interval containing zero has no difference in the complex case, though it strongly influences the dynamics in the real case. Secondly, in the complex case, if $\det A>0$ and $\tr A>0$, the whole boundary of $\CC_{\R^2}$ is a control set with an empty interior, contrasting with the two distinct one-point control sets in the real case.

One exciting behavior for both cases, is the existence of a control set with a nonempty interior when $\det A\neq0$, independently of the control range $\Omega$.
\end{remark}

\end{document}